\numberwithin{equation}{section}
\newtheoremstyle{thmlemcorr}{10pt}{10pt}{\itshape}{}{\bfseries}{.}{10pt}{{\thmname{#1}\thmnumber{ #2}\thmnote{ (#3)}}}
\newtheoremstyle{thmlemcorr*}{10pt}{10pt}{\itshape}{}{\bfseries}{.}\newline{{\thmname{#1}\thmnumber{ #2}\thmnote{ (#3)}}}
\newtheoremstyle{defi}{10pt}{10pt}{\itshape}{}{\bfseries}{.}{10pt}{{\thmname{#1}\thmnumber{ #2}\thmnote{ (#3)}}}
\newtheoremstyle{remexample}{10pt}{10pt}{}{}{\bfseries}{.}{10pt}{{\thmname{#1}\thmnumber{ #2}\thmnote{ (#3)}}}
\newtheoremstyle{ass}{10pt}{10pt}{}{}{\bfseries}{.}{10pt}{{\thmname{#1}\thmnumber{ A#2}\thmnote{ (#3)}}}
\theoremstyle{thmlemcorr}
\newtheorem{theorem}{Theorem}
\numberwithin{theorem}{section}
\newtheorem{corollary}[theorem]{Corollary}
\newtheorem{proposition}[theorem]{Proposition}
\theoremstyle{thmlemcorr*}
\newtheorem{theorem*}{Theorem}
\newtheorem{lemma*}[theorem]{Lemma}
\newtheorem{corollary*}[theorem]{Corollary}
\newtheorem{proposition*}[theorem]{Proposition}
\newtheorem{problem*}[theorem]{Problem}
\newtheorem{conjecture*}[theorem]{Conjecture}
\theoremstyle{defi}
\newtheorem{definition}[theorem]{Definition}
\theoremstyle{remexample}
\newtheorem{remark}[theorem]{Remark}
\theoremstyle{ass}
\newtheorem{assumption}{Assumption}
\newcommand{\Bcal}{\mathcal{B}}
\newcommand{\Ccal}{\mathcal{C}}
\newcommand{\Fcal}{\mathcal{F}}
\newcommand{\Ucal}{\mathcal{U}}
\newcommand{\Xcal}{\mathcal{X}}
\newcommand{\Ycal}{\mathcal{Y}}
\newcommand{\N}{\mathbb{N}}
\newcommand{\R}{\mathbb{R}}
\newcommand{\C}{\mathbb{C}}
\newcommand{\Z}{\mathbb{Z}}
\def\XXint#1#2#3{{\setbox0=\hbox{$#1{#2#3}{\int}$} 
\vcenter{\hbox{$#2#3$}}\kern-.5\wd0}}
\newcommand{\be}{\begin{equation}}
\newcommand{\en}{\end{equation}}
\newcommand{\p}{\partial}
\renewcommand{\l}{{\lambda}}
\renewcommand{\phi}{\varphi}
\title{On the spectrum of shear flows and uniform ergodic theorems}
\author{Jonathan Ben-Artzi}
\address{Department of Applied Mathematics and Theoretical Physics, University of Cambridge, Centre for Mathematical Sciences, Wilberforce Road, Cambridge CB3 0WA, United Kingdom.}
\email{J.Ben-Artzi@damtp.cam.ac.uk}
\keywords{First Order Operator, Density of States, Weighted Spaces, Uniform Ergodic Theorem, Singular Continuous Spectrum, Shear Flow}
\subjclass[2010]{Primary 34L15, Secondary 35P15, 47A35, 37A30}
\begin{document}
\maketitle

\begin{abstract}
The spectra of parallel flows (that is, flows governed by first-order differential operators parallel to one direction) are investigated, on both $L^2$ spaces and weighted-$L^2$ spaces. As a consequence, an example of a flow admitting a purely singular continuous spectrum is provided. For flows admitting more regular spectra the density of states is analyzed, and spaces on which it is uniformly bounded are identified. As an application, an ergodic theorem with uniform convergence is proved. 

\vspace{4pt}

%

\vspace{4pt}

\noindent\textsc{Date:} \today.
\end{abstract}

\tableofcontents

\section{Introduction}\label{sec:intro}
We study operators of the form
	\begin{equation}\label{eq:shear-flow}
	H_1=-i\psi\frac{\partial}{\partial x_1},\quad\text{ on }L^2(\R^d)
	\end{equation}
and
	\begin{equation}\label{shear-flow-weighted}
	H_w=-i\frac{\psi}{w}\frac{\partial}{\partial x_1},\quad\text{ on }L^2_w(\R^d)
	\end{equation}
where $0<\psi\in L^\infty(\R^d)$ depends only on $x'=(x_2,\dots,x_d)$, $0<w\in L^\infty(\R^d)$ is a weight function and $L^2_w$ the weighted-$L^2$ space endowed with the inner product
	\begin{equation}
	(f,g)_{L^2_w}=\int_{\R^d}f\overline{g}w.
	\end{equation}
We naturally call such operators \emph{shear flows} which is the standard term in fluid dynamics for flows that have straight and parallel flow lines.  Since $\psi$ and $w$ are assumed to be real-valued both $H_1$ and $H_w$ are symmetric. Self-adjointness of $H_1$ (under mild conditions on $\psi$) is standard (Corollary \ref{cor:derivative}), but that is not the case for $H_w$. In Theorem \ref{thm:self-adjoint-weighted} we give sufficient conditions on $w$ and find an appropriate domain so that $H_w$ is self-adjoint. We characterize the spectrum of $H_w$ and give an explicit example where the spectrum is purely singular continuous.

In cases where the spectrum is more well-behaved, we study the density of states of both operators and identify spaces $\Xcal^\sigma\subset L^2(\R^d)$ and $\Ycal^\sigma\subset L^2_w(\R^d)$ on which there is an explicit estimate for the density of states of both operators (the parameter $\sigma\in\R$ is related to the behavior at infinity). Letting $G_t=e^{itH_w}$ be the one-parameter unitary group of transformations generated by $H_w$ and letting $P$ be the orthogonal projection onto $\{f\in L^2_w(\R^d)\ | \ f\circ G_t=f\}$, we use the estimate on the density of states to obtain the \emph{uniform} convergence
	\begin{equation}
	\lim_{T\to\infty}\frac{1}{2T}\int_{-T}^{T}G_t\ dt=P\quad\text{ in }\Bcal(\Ycal^\sigma,\Ycal^{-\sigma})
	\end{equation}
where $\Ycal^{-\sigma}$ is the dual space to $\Ycal^\sigma$ with respect to the inner product in $L^2_w$ and $\Bcal(\Ycal^\sigma,\Ycal^{-\sigma})$ denotes the space of bounded linear operators from $\Ycal^\sigma$ to $\Ycal^{-\sigma}$. The proof follows the ideas of von Neumann \cite{VonNeumann1932a} in his proof of the ergodic theorem. It diverges from von Neumann's proof in that we replace the Stieltjes measure $d(E(\lambda)f,g)$ by its density using the estimates on the density of states.

The existing literature on spectra of first-order differential operators, it appears, has primarily been in relation to Euler's equations for incompressible fluids, in particular in two-dimensions and in bounded domains. Recently, Cox \cite{Cox2013a} studied the spectrum of the linearization of Euler's equations in the vorticity formulation. Specifically, he focused on the spectrum due to \emph{periodic} trajectories of the flow. These results are analogous to our results for \emph{unbounded} trajectories in \emph{weighted} spaces, see Corollary \ref{cor:spectrum-confined}. We refer to the references within \cite{Cox2013a} as well as the survey article \cite{Shvydkoy2005} for further discussion.

The literature on ergodic theory is vast, going back to von Neumann \cite{VonNeumann1932a} and Birkhoff \cite{Birkhoff1931}. In his proof, von Neumann for the first time provided a concrete example of the benefits of the spectral theorem for operators in Hilbert spaces, and, in particular, the resolution of the identity related to self-adjoint operators. Referring to \cite{Koopman1931}, he says:
	\begin{quote}
	\emph{The pith of the idea in Koopman's method resides in the conception of the spectrum $E(\lambda)$ reflecting, in its structure, the properties of the dynamical system -- more precisely, those properties of the system which are true ``almost everywhere,'' in the sense of Lebesgue sets.}
	\end{quote}
More recent results often treat discrete ergodic theorems, e.g. in relation to the theory of Ces\`aro sums. For instance, in \cite{Kakutani1981} it shown that such sums may converge very slowly. The relationship between the behavior of the spectral measure and the density of states -- particularly near zero -- and ergodic averages has been the subject of study of some in the Russian school. See \cite{Dzhulai2011} for instance, and the references therein.

In a separate paper \cite{Ben-Artzi2012b} together with a coauthor, the problem of extending the present results to more general first-order differential operators (that is, vectorfields) in $\R^d$ and even on Riemannian manifolds, is being addressed. The main hurdle is in finding a satisfactory set of sufficient conditions for such vectorfields to be rectifiable and hence adhere to the results presented here.

\section{Background}\label{sec:back}

\subsection{Self-adjoint operators}
The spectral theorem is the central tool we use in this work. As a fundamental theorem in modern analysis, there is no need to state it here. However, as it is only applicable to self-adjoint operators, it is essential to study the self-adjointness of differential operators which are formally symmetric. This becomes especially delicate when considering operators on weighted spaces, see Section \ref{sec:trans-op-weighted}. In this section we use the letter $\mathcal H$ to denote a Hilbert space, and $H$ to denote a generic linear operator on it. The domain of $H$ is denoted $D(H)$. We shall always specify what precise assumptions are imposed on them.

\begin{remark}
When there is no risk for confusion we shall abuse notation and use the same symbol for an essentially self-adjoint operator and its unique self-adjoint extension.
\end{remark}

For the following definition we follow \cite[IV-\S5.1 and V-\S3.4]{Kato1995}:
\begin{definition}
The \emph{deficiency} of $H:D(H)\subset\mathcal H\to\mathcal H$ is the codimension of the range of $H$ in $\mathcal H$:
	$$
	\operatorname{def}(H)=\dim\left(\mathcal H/\operatorname{Ran}(H)\right).
	$$
If $H$ is closed and symmetric then we define its \emph{deficiency index} to be
	$$
	(m^-,m^+):=(\operatorname{def}(H-i),\operatorname{def}(H+i)).\footnote{It is well known that the deficiency of a symmetric operator is constant on $\C^\pm$, see \cite[V-\S3.4]{Kato1995}.}
	$$
\end{definition}

The following result is very useful for determining (essential) self-adjointness of operators. For proofs and discussions see \cite[Theorem VIII.3]{Reed1981}, \cite[V, Theorem 3.16]{Kato1995} or \cite{Tao2011b}.

\begin{proposition}[Basic criterion for self-adjointness]\label{prop:self-adjoint-condition}
Let $H:D(H)\subset\mathcal H\to\mathcal H$ be densely defined and symmetric. Then $H$ is essentially self-adjoint if and only if $\operatorname{Ran}(H\pm i)$ are both dense in $\mathcal H$. If $H$ is also closed, then it is self-adjoint if and only if $\operatorname{Ran}(H\pm i)=\mathcal H$ (that is, the deficiency index of $H$ is $(0,0)$).
\end{proposition}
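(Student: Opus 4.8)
The statement to be established is the standard \person{von Neumann} criterion for (essential) self-adjointness, so the plan is to reduce it entirely to the fundamental identity $\mathcal H=\overline{\operatorname{Ran}(H\pm i)}\oplus\ker(H^*\mp i)$ together with the closedness of the graph of a self-adjoint operator. First I would record the elementary computation that for any $f\in D(H)$ one has $\|(H\pm i)f\|^2=\|Hf\|^2+\|f\|^2\ge\|f\|^2$, which shows that $H\pm i$ is injective with bounded inverse on its range, and in particular that $\operatorname{Ran}(H\pm i)$ is closed precisely when $H$ is closed. Next I would identify the orthogonal complement of $\operatorname{Ran}(H\pm i)$: a vector $g$ is orthogonal to $\operatorname{Ran}(H+i)$ if and only if $((H+i)f,g)=0$ for all $f\in D(H)$, i.e. $(Hf,g)=(f,ig)$ for all $f\in D(H)$, which by definition of the adjoint means $g\in D(H^*)$ with $H^*g=ig$, that is $g\in\ker(H^*-i)$; symmetrically $\operatorname{Ran}(H-i)^\perp=\ker(H^*+i)$.

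With these two ingredients the proposition falls out. For the first assertion, $\operatorname{Ran}(H\pm i)$ are both dense $\iff$ $\ker(H^*\mp i)=\{0\}$, i.e. the deficiency indices of the closure $\overline H$ vanish; and a densely defined symmetric operator is essentially self-adjoint exactly when its closure has deficiency index $(0,0)$. Here I would invoke the standard fact (\cite[VIII]{Reed1981}, \cite[V-\S3]{Kato1995}) that $\overline H=H^{**}$ and $(\overline H)^*=H^*$, so that $\ker(H^*\mp i)=\{0\}$ forces $\overline H\pm i$ to be surjective (its range is closed, being the range of a closed operator bounded below, and dense), hence bijective with bounded inverse, whence $\overline H$ is self-adjoint. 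For the second assertion, when $H$ is already closed the same bounded-below estimate shows $\operatorname{Ran}(H\pm i)$ is closed, so density is equivalent to equality with $\mathcal H$; and $\operatorname{Ran}(H\pm i)=\mathcal H$ together with the inequality $\|(H\pm i)f\|\ge\|f\|$ gives bounded two-sided inverses $(H\pm i)^{-1}$, from which $H=H^*$ follows by the symmetry of $H$ and a short duality argument (any $g\in D(H^*)$ satisfies $(H^*-i)g\in\mathcal H=\operatorname{Ran}(H-i)$, so $(H^*-i)g=(H-i)f$ for some $f\in D(H)$; since $H\subset H^*$, $(H^*-i)(g-f)=0$, but $H^*-i$ is injective as $\operatorname{Ran}(H+i)=\mathcal H$, so $g=f\in D(H)$).

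The only genuinely substantive point — and the one I would be most careful about — is the passage from "dense range" to "full range" in the essentially self-adjoint case: this is where closedness of the relevant operator is used, and one must apply the estimate $\|(\overline H\pm i)f\|\ge\|f\|$ to the \emph{closure} $\overline H$ rather than to $H$ itself, which is legitimate because $D(H)$ is a core for $\overline H$ and the inequality passes to the closure by continuity. Everything else is bookkeeping with adjoints and orthogonal complements. Since the excerpt explicitly cites \cite{Reed1981}, \cite{Kato1995}, and \cite{Tao2011b} for this result, in the write-up I would keep the argument brief, presenting the norm identity and the $\operatorname{Ran}(H\pm i)^\perp=\ker(H^*\mp i)$ computation in full and citing those sources for the remaining standard facts about closures and adjoints.
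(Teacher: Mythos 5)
Your proposal is correct: it is the standard von Neumann deficiency-space argument (the norm identity $\|(H\pm i)f\|^2=\|Hf\|^2+\|f\|^2$, the identification $\operatorname{Ran}(H\pm i)^\perp=\ker(H^*\mp i)$, and closedness to upgrade dense range to full range), with the one genuinely delicate step -- applying the lower bound to the closure $\overline H$ rather than to $H$ -- handled properly. The paper itself gives no proof of this proposition, deferring to \cite[Theorem VIII.3]{Reed1981}, \cite[V, Theorem 3.16]{Kato1995} and \cite{Tao2011b}, and your argument is essentially the one found in those references, so citing them for the remaining bookkeeping, as you suggest, is exactly in line with the paper's treatment.
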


We refer to T. Tao's blog \cite{Tao2011b} for a discussion of the following proposition, which is essentially a reformulation of Stone's theorem:

\begin{proposition}\label{prop:schrodinger-criterion}
Let $H:D(H)\subset\mathcal H\to\mathcal H$ be densely defined and symmetric and suppose that for every $f\in D(H)$ there exists a continuously (strongly) differentiable function $u:\R\to D(H)$ solving
	$$
	\frac{\p u}{\p t}+iHu=0, \quad
	u(0)=f.
	$$
Then $H$ is essentially self-adjoint.
\end{proposition}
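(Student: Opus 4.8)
The plan is to verify the hypothesis of Proposition~\ref{prop:self-adjoint-condition}: since $H$ is densely defined and symmetric, it suffices to show that $\operatorname{Ran}(H+i)$ and $\operatorname{Ran}(H-i)$ are both dense in $\mathcal H$. By the standard duality $\operatorname{Ran}(H\pm i)^\perp=\ker(H^*\mp i)$, this is equivalent to showing that any $g\in D(H^*)$ satisfying $H^*g=\pm ig$ must be zero. The mechanism---which is precisely the nontrivial implication of Stone's theorem---is to pair the solutions furnished by the hypothesis against such a $g$ and exploit an a priori $L^2$ bound.

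First I would record conservation of the $\mathcal H$-norm along the evolution. Fix $f\in D(H)$ and let $u\colon\R\to D(H)$ solve $u'+iHu=0$, $u(0)=f$. Since $u(t)\in D(H)$ and $H$ is symmetric, $(Hu(t),u(t))$ is real, so
\[
\frac{d}{dt}\|u(t)\|^2=2\operatorname{Re}(u'(t),u(t))=2\operatorname{Re}\bigl(-i(Hu(t),u(t))\bigr)=0,
\]
and hence $\|u(t)\|=\|f\|$ for all $t\in\R$ (using that $t\mapsto\|u(t)\|^2$ is differentiable because $u$ is strongly $C^1$). Now fix $g\in D(H^*)$ with $H^*g=ig$ and put $\phi(t):=(u(t),g)$. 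The functional $v\mapsto(v,g)$ is bounded, so $\phi$ is $C^1$ and
\[
\phi'(t)=(u'(t),g)=-i(Hu(t),g)=-i(u(t),H^*g)=-i(u(t),ig)=-\phi(t),
\]
whence $\phi(t)=\phi(0)e^{-t}$. But $|\phi(t)|\le\|u(t)\|\,\|g\|=\|f\|\,\|g\|$ is bounded on all of $\R$, which forces $\phi(0)=(f,g)=0$. Since $D(H)$ is dense, $g=0$. Repeating the computation with $H^*g=-ig$ yields instead $\phi'(t)=\phi(t)$, $\phi(t)=\phi(0)e^{t}$, and again $\phi(0)=0$, so that $g=0$ in this case too. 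Thus $\operatorname{Ran}(H+i)$ and $\operatorname{Ran}(H-i)$ are dense, and Proposition~\ref{prop:self-adjoint-condition} gives essential self-adjointness of $H$.

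I do not anticipate a real obstacle: the argument is short and elementary. The only points that need a little care are the justification that $t\mapsto(u(t),g)$ may be differentiated by passing the derivative through the bounded functional $g$, and tracking the complex conjugates so that the sign in $\phi'=\mp\phi$---and therefore the decay or growth of $\phi$ in the relevant time direction---is correct. The remaining ingredients are just the Cauchy--Schwarz inequality and the observation that a bounded solution of $\phi'=\pm\phi$ on $\R$ vanishes identically.
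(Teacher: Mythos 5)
Your proof is correct, and it is essentially the standard argument for this statement, which the paper itself does not prove but only cites (Tao's blog): one pairs a putative deficiency vector $g$ with $H^*g=\pm ig$ against the globally defined solutions $u(t)$, derives $\phi'=\mp\phi$ for $\phi(t)=(u(t),g)$, and uses the norm conservation coming from symmetry to rule out the resulting exponential growth, forcing $g\perp D(H)$ and hence $g=0$, so that Proposition \ref{prop:self-adjoint-condition} applies. No gaps; the conjugation signs and the appeal to global-in-time existence (needed for boundedness of $\phi$ on all of $\R$ in both directions) are handled correctly.
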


As expected, transport operators corresponding to divergence-free vectorfields are essentially self-adjoint. Since this is not trivial, we quote the following result which appears in \cite{Tao2011b} and whose proof relies on Proposition \ref{prop:schrodinger-criterion}.

\begin{proposition}\label{thm:transport-self-adjoint}
Let $(\mathcal M,g)$ be a Riemannian manifold and let $u$ be a divergence-free vectorfield on $\mathcal M$ (with respect to the metric $g$). Let $X(s,x)$ denote the integral curves (trajectories) of the flow, solving the equation $\frac{d}{ds}X(s,x)=u(X(s,x))$ with initial conditions $X(0,x)=x$, and assume that for each $x\in\mathcal M$ the solution of this equation exists for all $s$. Then the operator
	$$
	H=-iu\cdot\nabla:C_0^\infty(\mathcal M)\subset L^2(\mathcal M)\to L^2(\mathcal M)
	$$
is essentially self-adjoint.
\end{proposition}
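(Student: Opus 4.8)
The strategy is to apply Proposition~\ref{prop:schrodinger-criterion}. Thus two things must be checked: that $H=-iu\cdot\nabla$ is densely defined and symmetric on $C_0^\infty(\mathcal M)$, and that for every $f\in C_0^\infty(\mathcal M)$ there is a continuously strongly differentiable curve $v\colon\R\to C_0^\infty(\mathcal M)$ with $v(0)=f$ solving $\p_t v+iHv=0$. Density of $C_0^\infty(\mathcal M)$ in $L^2(\mathcal M)$ is standard. Symmetry is exactly where the divergence-free hypothesis enters: for $f,g\in C_0^\infty(\mathcal M)$ one integrates by parts on $\mathcal M$ — there are no boundary contributions because the supports are compact — and the identity $\diverg(u\,f\overline g)=(\diverg u)\,f\overline g+(u\cdot\nabla f)\overline g+f\,\overline{u\cdot\nabla g}$ together with $\diverg u=0$ moves the vectorfield from the first slot to the second; combined with the factor $-i$ this yields $(Hf,g)_{L^2}=(f,Hg)_{L^2}$.

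For the second point I would solve the equation by the method of characteristics along the \emph{backward} flow. Since $u$ is smooth and, by hypothesis, the trajectories $X(s,\cdot)$ exist for all $s\in\R$, standard ODE theory gives $X\in C^\infty(\R\times\mathcal M,\mathcal M)$ and shows that each $X(s,\cdot)$ is a diffeomorphism of $\mathcal M$ with inverse $X(-s,\cdot)$. Define $v(t,x):=f\big(X(-t,x)\big)$. If $v(t,x)\neq0$ then $X(-t,x)\in\supp f$, i.e.\ $x\in X(t,\supp f)$, so $v(t,\cdot)$ is supported in the compact set $X(t,\supp f)$ and hence belongs to $C_0^\infty(\mathcal M)=D(H)$. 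Using the flow identity $X(-t,X(t,x))=x$, the map $t\mapsto v(t,X(t,x))\equiv f(x)$ is constant; differentiating by the chain rule, using $\tfrac{d}{dt}X(t,x)=u(X(t,x))$, and then substituting $y=X(t,x)$ gives the pointwise identity $\p_t v(t,y)+u(y)\cdot\nabla v(t,y)=0$, which is precisely $\p_t v+iHv=0$ since $iH=u\cdot\nabla$; and $v(0,\cdot)=f$.

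It then remains to upgrade this classical solution to a genuine $C^1$ curve in the $L^2$ topology. The key observation is that over any bounded time interval the solution stays inside a \emph{fixed} compact set: for $T>0$ one has $\supp v(t,\cdot)\subseteq K_T:=X\big([-T,T]\times\supp f\big)$ for all $\abs t\le T$, and $K_T$ is compact as a continuous image of a compact set — this is where completeness of the flow is essential. On $[-T,T]\times K_T$ the function $(s,x)\mapsto f(X(-s,x))$ is $C^\infty$, so its first two $t$-derivatives are bounded there; a first-order Taylor expansion in $t$ with uniform remainder then gives $\sup_x\abs{v(t+h,x)-v(t,x)-h\,\p_t v(t,x)}\le C_T\,h^2$ for $\abs t,\abs{t+h}\le T$, and since the left-hand side is supported in $K_T$ this upgrades to $\norm{v(t+h,\cdot)-v(t,\cdot)-h\,\p_t v(t,\cdot)}_{L^2}=\BigO(h^2)$. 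The same uniform bounds show $t\mapsto\p_t v(t,\cdot)=iHv(t,\cdot)$ is continuous into $L^2(\mathcal M)$. Hence all hypotheses of Proposition~\ref{prop:schrodinger-criterion} are satisfied and $H$ is essentially self-adjoint.

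I expect the main obstacle to be precisely this last step: converting the characteristic identity into strong differentiability in $L^2$, which requires the uniform-in-$x$ estimates above and therefore the remark that the transported supports remain in a common compact set on bounded time intervals. Everything else — smoothness and invertibility of the flow, the integration by parts — is routine once the standing smoothness and completeness assumptions on $u$ are in force.
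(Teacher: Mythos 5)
Your proof is correct and follows exactly the route the paper itself relies on: the paper does not prove this proposition but quotes it from the cited source, noting that the argument rests on Proposition~\ref{prop:schrodinger-criterion}, and your argument (symmetry from $\diverg u=0$ via integration by parts, plus the globally defined transported solution $v(t,\cdot)=f(X(-t,\cdot))$ staying in a fixed compact set on bounded time intervals to get strong $C^1$ regularity in $L^2$) is precisely that proof, spelled out. The only blemish is the harmless sign slip at the end, where $\p_t v=-iHv$, not $iHv$, which does not affect the continuity statement you need.
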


\begin{corollary}\label{cor:derivative}
The operator $T=-i\frac{d}{dx}:C_0^\infty(\R)\subset L^2(\R)\to L^2(\R)$ and, more generally, the shear flow operator $H_1=-i\frac{\p}{\p x_1}:C_0^\infty(\R^d)\subset L^2(\R^d)\to L^2(\R^d)$ are both essentially self-adjoint.
\end{corollary}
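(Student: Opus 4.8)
The plan is to deduce Corollary~\ref{cor:derivative} directly from Proposition~\ref{thm:transport-self-adjoint} by exhibiting each operator as the transport operator of a globally defined divergence-free vectorfield on an appropriate Riemannian manifold, and checking that the two minor discrepancies between the stated hypotheses and the situation at hand --- namely, working on $C_0^\infty(\R)$ versus $C_0^\infty(\mathcal{M})$ with $\mathcal{M}$ noncompact, and the normalization constant $-i$ --- do not cause trouble.

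First I would treat $T=-i\frac{d}{dx}$ on $L^2(\R)$. Take $\mathcal{M}=\R$ with the standard Euclidean metric $g=dx^2$, and let $u\equiv 1$ be the constant vectorfield $\partial/\partial x$. Then $\diverg u=0$ trivially, and the flow is $X(s,x)=x+s$, which exists for all $s\in\R$ and all $x$; so Proposition~\ref{thm:transport-self-adjoint} applies and gives that $H=-iu\cdot\nabla=-i\frac{d}{dx}$ is essentially self-adjoint on $C_0^\infty(\R)\subset L^2(\R)$, which is exactly the claim for $T$.

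Next, for the shear flow operator $H_1=-i\frac{\p}{\p x_1}$ on $L^2(\R^d)$, I would take $\mathcal{M}=\R^d$ with the Euclidean metric and $u=e_1=\partial/\partial x_1$, again constant hence divergence-free, with complete flow $X(s,x)=x+s e_1$. Proposition~\ref{thm:transport-self-adjoint} then yields essential self-adjointness of $-i u\cdot\nabla = -i\frac{\p}{\p x_1}=H_1$ on $C_0^\infty(\R^d)$. (Alternatively, one can view $H_1$ as the ``constant-fiber'' operator $T\otimes I$ relative to the splitting $L^2(\R^d)=L^2(\R_{x_1})\otimes L^2(\R^{d-1}_{x'})$ and invoke the standard fact that a tensor product of an essentially self-adjoint operator with the identity, on the algebraic tensor product of their domains, is essentially self-adjoint; but since $C_0^\infty(\R^d)$ already sits inside the natural domain and Proposition~\ref{thm:transport-self-adjoint} covers it directly, the vectorfield argument is cleaner.)

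The only point that requires a word of care --- and the one I would flag as the ``main obstacle,'' though it is mild --- is that Proposition~\ref{thm:transport-self-adjoint} as quoted already allows noncompact $\mathcal{M}$ (it is stated for a general Riemannian manifold with a completeness assumption on the flow), so no additional work is needed there; one simply checks the completeness hypothesis, which is immediate for a constant vectorfield on $\R^d$. Thus the corollary follows with essentially no computation once the right special case of the general proposition is identified.
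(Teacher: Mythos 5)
Your argument is exactly the one the paper intends: the corollary is stated as an immediate consequence of Proposition \ref{thm:transport-self-adjoint}, applied to the constant (hence divergence-free) vectorfield $u\equiv 1$ on $\R$, respectively $u=e_1$ on $\R^d$, with the Euclidean metric, whose flow $X(s,x)=x+se_1$ is trivially complete. Your checks of the completeness hypothesis and the noncompactness point are correct, so the proposal matches the paper's (implicit) proof.
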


\subsection{The spectral family}\label{sec:spec-fam}
Consider some self-adjoint operator $H:D(H)\subset\mathcal{H}\to\mathcal{H}$. Recall first the characterization of its spectral family (also known as a \emph{resolution of the identity}). The spectral family $\{E(\lambda)\}_{\lambda\in\R}$ of $H$ is a family of projection operators in $\mathcal{H}$ with the property that, for each $\lambda\in\R$, the subspace $\mathcal{H}^\lambda=E(\lambda)\mathcal{H}$ is the largest closed subspace such that
\begin{enumerate}
\item $\mathcal{H}^\lambda$ \emph{reduces} $H$, namely, $HE(\lambda)g=E(\lambda)Hg$ for every $g\in D(H)$. In particular, if $g\in D(H)$ then also $E(\lambda)g\in D(H)$.
\item $(Hu,u)_\mathcal H\leq \lambda (u,u)_\mathcal H$ for every $u\in\mathcal{H}^\lambda\cap D(H)$.
\end{enumerate}

\subsubsection{The spectral measure and its absolutely continuous part}
Given any $f,g\in\mathcal H$ the spectral family defines a complex function of bounded variation on the real line, given by
	\be\label{eq:spec-meas}
	\R\ni\lambda\mapsto(E(\lambda)f,g)_{\mathcal H}.
	\en
It is well-known that such a function gives rise to a measure (depending on $f,g$) called the \emph{spectral measure}. Recall the following useful fact:
\begin{proposition}[{\cite[X-\S1.2, Theorem 1.5]{Kato1995}}]\label{prop:abs-cont-subs}
Let $U\subset\R$ be open. The set of $f,g\in\mathcal H$ for which the spectral measure is absolutely continuous in $U$ with respect to the Lebesgue measure forms a closed subspace $\mathcal A\subset\mathcal H$. This subspace is referred to as the \emph{absolutely continuous subspace of $H$ on $U$}.
\end{proposition}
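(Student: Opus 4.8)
The plan is to reduce the statement to the positive scalar measures carried by individual vectors. For $f\in\mathcal H$ and a Borel set $B\subset\R$ let $E(B)$ denote the spectral projection that the spectral family $\{E(\lambda)\}_{\lambda\in\R}$ assigns to $B$, and set
$$
\mu_f(B):=\bigl(E(B)f,f\bigr)_{\mathcal H}=\normn{E(B)f}_{\mathcal H}^2 .
$$
Because $E(B)$ is an orthogonal projection this is a finite positive Borel measure of total mass $\normn{f}_{\mathcal H}^2$, and the bounded-variation function \eqref{eq:spec-meas} attached to a pair $f,g$ generates the complex measure $B\mapsto(E(B)f,g)_{\mathcal H}$. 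Reading ``absolutely continuous in $U$'' as the requirement that every Borel set $B\subset U$ of zero Lebesgue measure satisfy $\mu(B)=0$, what has to be shown is that
$$
\mathcal A:=\setn{f\in\mathcal H}{\mu_f\text{ is absolutely continuous in }U}
$$
is a closed linear subspace of $\mathcal H$, and that $B\mapsto(E(B)f,g)_{\mathcal H}$ is absolutely continuous in $U$ for all $f,g\in\mathcal A$.

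First I would check that $\mathcal A$ is a linear subspace. Homogeneity is immediate from $\mu_{cf}=\absn{c}^2\mu_f$. For additivity, $E(B)$ is linear and a contraction, so the elementary inequality $\normn{a+b}^2\le2\normn{a}^2+2\normn{b}^2$ gives $\mu_{f+g}(B)\le2\mu_f(B)+2\mu_g(B)$ for every Borel $B$; hence whenever $f,g\in\mathcal A$ and $B\subset U$ is Lebesgue-null, the right-hand side vanishes and so does $\mu_{f+g}(B)$, i.e.\ $f+g\in\mathcal A$.

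Next, closedness. If $f_n\in\mathcal A$ and $f_n\to f$ in $\mathcal H$, then $\normn{E(B)f_n-E(B)f}_{\mathcal H}\le\normn{f_n-f}_{\mathcal H}\to0$ since $\normn{E(B)}\le1$, whence $\mu_f(B)=\lim_n\mu_{f_n}(B)$ for every $B$; for $B\subset U$ Lebesgue-null each $\mu_{f_n}(B)$ is zero, so $\mu_f(B)=0$ and $f\in\mathcal A$. For the statement about pairs, given $f,g\in\mathcal A$ use $E(B)=E(B)^2=E(B)^\ast$ to write $(E(B)f,g)_{\mathcal H}=(E(B)f,E(B)g)_{\mathcal H}$ and estimate
$$
\absn{(E(B)f,g)_{\mathcal H}}\le\normn{E(B)f}_{\mathcal H}\,\normn{g}_{\mathcal H}=\mu_f(B)^{1/2}\,\normn{g}_{\mathcal H};
$$
a Lebesgue-null $B\subset U$ therefore forces $(E(B)f,g)_{\mathcal H}=0$, so the complex measure $B\mapsto(E(B)f,g)_{\mathcal H}$ is absolutely continuous in $U$ — in fact this holds already for $f\in\mathcal A$ and arbitrary $g\in\mathcal H$, which also identifies $\mathcal A$ with the set of $f$ whose pairing with every $g$ yields an absolutely continuous spectral measure on $U$. (Alternatively, the pairs statement follows from the polarization identity $(E(B)f,g)_{\mathcal H}=\tfrac14\sum_{k=0}^{3}i^k\mu_{f+i^kg}(B)$ once $\mathcal A$ is known to be a subspace.)

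I do not anticipate a real obstacle: the whole argument rests only on $E(B)$ being a linear contraction — indeed an orthogonal projection — and on the parallelogram-type bound above. The single point that needs a little care is the bookkeeping hidden in the phrase ``absolutely continuous in $U$'': one must use its \emph{local} meaning — that Lebesgue-null subsets of $U$ are $\mu_f$-null — rather than demanding that the spectral measures be carried by $U$, since $\mathcal A$ will in general contain vectors whose spectral measure also charges $\R\setminus U$. Passing between this scalar description of $\mathcal A$ and the ``pairs'' wording of the statement is precisely what the Cauchy–Schwarz step (or, equivalently, polarization) in the previous paragraph accomplishes.
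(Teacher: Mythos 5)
Your argument is correct: reducing to the diagonal measures $\mu_f(B)=\|E(B)f\|_{\mathcal H}^2$, using $\|E(B)\|\leq 1$ for linearity and closedness, and recovering the statement for pairs by Cauchy--Schwarz (or polarization) is exactly the standard proof of this fact. Note that the paper itself gives no proof, citing Kato \cite[X-\S1.2, Theorem 1.5]{Kato1995}, and your argument is essentially the one found there, including the correct local reading of ``absolutely continuous in $U$'' as the vanishing of the spectral measure on Lebesgue-null subsets of $U$.
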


\subsubsection{The density of states}\label{sec:dens-states}
Let $\mathcal{A}\subset\mathcal H$ be the absolutely continuous subspace of $H$ on $U$ and let $\lambda_0\in U$. If there exists a subspace $\mathcal X\subset\mathcal{A}$ equipped with a stronger norm such that the bilinear form $\frac{d}{d\lambda}\big|_{\lambda=\lambda_0}(E(\lambda)\cdot,\cdot)_{\mathcal H}:\mathcal X \times \mathcal X\to\C$ is bounded then it induces a bounded operator $A(\lambda_0):\mathcal X\to\mathcal X^*$ defined via
	\be\label{eq:A-lambda}
	\left<A(\lambda_0)f,g\right>
	=
	\frac{d}{d\lambda}\Big|_{\lambda=\lambda_0}(E(\lambda)f,g)_{\mathcal H},\quad f,g\in\Xcal,
	\en
where $\left<\cdot,\cdot\right>$ is the $(\mathcal X^*,\mathcal X)$ dual-space pairing, and $\mathcal X^*$ is the dual of $\mathcal X$ with respect to the inner-product on $\mathcal H$. We refer to both the bilinear form $\frac{d}{d\lambda}\big|_{\lambda=\lambda_0}(E(\lambda)\cdot,\cdot)_{\mathcal H}$ and the operator $A(\lambda_0)$ as the \emph{density of states} of the operator $H$ at $\lambda_0$. In physics, the density of states at $\lambda_0$ represents the number possible states a system can attain at the energy level $\lambda_0$.

\subsection{Multiplication operators}
Multiplication operators have the added benefit of having a spectral family that is completely identifiable. Let $\Omega\subset\R^d$ be a domain that is either bounded or unbounded and consider the self-adjoint operator $H:D(H)\subset L^2(\Omega)\to L^2(\Omega)$ mapping
	$$
	H: u\mapsto mu
	$$
where $m:\Omega\to\R$ is a real locally bounded function. Then we can make the following relevant observation \cite[X-\S1.2, Example 1.9]{Kato1995} :

\begin{proposition}\label{cl:spec-meas}
The projection $E(\lambda)$ where $\lambda\in\R,$ is the set of $u\in L^2(\Omega)$ such that the set $\{x\in\Omega\ |\ u(x)\neq0,\ m(x)>\lambda\}$ has zero measure.
\end{proposition}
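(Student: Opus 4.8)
The plan is to show that the subspace appearing in the statement,
$N_\lambda:=\{\,u\in L^2(\Omega) : u=0 \text{ a.e. on } \{x\in\Omega : m(x)>\lambda\}\,\}$,
coincides with the range $\mathcal H^\lambda=E(\lambda)L^2(\Omega)$, by verifying that $N_\lambda$ satisfies the characterization of $\mathcal H^\lambda$ recalled in Section~\ref{sec:spec-fam}: it is the \emph{largest} closed subspace of $L^2(\Omega)$ that reduces $H$ and on which $(Hu,u)\le\lambda(u,u)$. First observe that $N_\lambda$ is the kernel of the bounded operator of multiplication by $\ONE_{\{m>\lambda\}}$, hence closed, and that the orthogonal projection onto it is multiplication by $\ONE_{\{m\le\lambda\}}$. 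I will work with the natural self-adjoint realization, whose domain is $D(H)=\{u\in L^2(\Omega):mu\in L^2(\Omega)\}$.

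Next I would check the two defining properties for $N_\lambda$. Writing $P$ for multiplication by $\ONE_{\{m\le\lambda\}}$: if $g\in D(H)$ then $|m\,Pg|\le|mg|\in L^2(\Omega)$, so $Pg\in D(H)$, and $HPg=m\ONE_{\{m\le\lambda\}}g=P(Hg)$; hence $N_\lambda$ reduces $H$, which is property~(1). If $u\in N_\lambda\cap D(H)$ then $m|u|^2=|mu|\,|u|\in L^1(\Omega)$ by Cauchy--Schwarz, and $(Hu,u)=\int_\Omega m|u|^2=\int_{\{m\le\lambda\}}m|u|^2\le\lambda\|u\|^2$, which is property~(2).

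The substantive step is maximality: every closed subspace $W$ that reduces $H$ and satisfies $(Hu,u)\le\lambda\|u\|^2$ for all $u\in W\cap D(H)$ must lie in $N_\lambda$. Since $W$ reduces $H$, its orthogonal projection $P_W$ commutes with every bounded Borel function of $H$; in particular, for $\phi_n=\ONE_{(\lambda,\lambda+n]}$ the multiplication operator $\phi_n(m)=\ONE_{\{\lambda<m\le\lambda+n\}}$ commutes with $P_W$, so $v_n:=\ONE_{\{\lambda<m\le\lambda+n\}}u\in W$ for every $u\in W$. Because $mv_n$ is bounded by $\max(|\lambda|,|\lambda+n|)\,|u|\in L^2(\Omega)$, also $v_n\in D(H)$; and since $|v_n|\le|u|$ and $m$ is real-valued (hence finite a.e.), dominated convergence gives $v_n\to\ONE_{\{m>\lambda\}}u$ in $L^2(\Omega)$. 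If $u\notin N_\lambda$ this limit is nonzero, so $v_n\ne0$ for some $n$; for that $n$, property~(2) forces $(Hv_n,v_n)\le\lambda\|v_n\|^2$, whereas $(Hv_n,v_n)=\int_{\{\lambda<m\le\lambda+n\}}m|v_n|^2>\lambda\|v_n\|^2$ since $m>\lambda$ on the support of $v_n$, which has positive measure --- a contradiction. Hence $W\subseteq N_\lambda$.

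Combining the three steps, $N_\lambda$ is the largest closed subspace with properties~(1) and~(2), so $N_\lambda=\mathcal H^\lambda=E(\lambda)L^2(\Omega)$, which is the assertion. I expect the only real obstacle to be the invariance claim used in the third step --- upgrading ``$P_W$ commutes with $H$'' to ``$P_W$ commutes with every bounded function of $H$'', which for this $H$ amounts to commuting with multiplication by any bounded $\sigma(m)$-measurable function. This is standard functional-calculus material; alternatively, the whole proposition can be obtained by checking directly that $\lambda\mapsto(\text{multiplication by }\ONE_{\{m\le\lambda\}})$ is a right-continuous, increasing, projection-valued family converging strongly to $0$ as $\lambda\to-\infty$ and to the identity as $\lambda\to+\infty$, and then invoking uniqueness of the spectral family of $H$.
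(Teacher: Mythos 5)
Your steps 1 and 2 are fine (and in fact more careful than the paper's one-line ``clearly''), and the truncation $v_n=\ONE_{\{\lambda<m\le\lambda+n\}}u$ is a nice device because it lands in $D(H)$ automatically, so you never need a density argument. The problem is the maximality step, and you have put your finger on it yourself but then waved it through. The assertion you need is that $P_W$ commutes with \emph{multiplication by} $\ONE_{\{\lambda<m\le\lambda+n\}}$, and you obtain it by identifying this multiplication operator with the Borel function $\phi_n(H)$, $\phi_n=\ONE_{(\lambda,\lambda+n]}$. But the statement ``bounded Borel functions of $H$ are the multiplications by $\phi\circ m$'' is exactly the identification of the spectral calculus of a multiplication operator; taking $\phi=\ONE_{(-\infty,\lambda]}$ it \emph{is} the proposition (and then no further argument is needed at all). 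Within this paper, where $E(\lambda)\mathcal H$ is only known through the variational characterization of Section \ref{sec:spec-fam}, citing that identification as ``standard functional-calculus material'' begs the question: the abstract fact that a reducing projection commutes with all bounded Borel functions of $H$ is unobjectionable, but the translation of those Borel functions into multiplication by $\phi\circ m$ is precisely what has to be proved.

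Two repairs are available. The heavier one keeps your scheme but starts from something directly computable: $W$ reducing $H$ gives $P_W(H\pm i)^{-1}=(H\pm i)^{-1}P_W$, and $(H\pm i)^{-1}$ is multiplication by $(m\pm i)^{-1}$ by inspection of the domain $D(H)=\{u:mu\in L^2(\Omega)\}$; one then passes from polynomials in $(m\pm i)^{-1}$ to $M_{\phi\circ m}$ for continuous $\phi$ by Stone--Weierstrass and to indicators by pointwise bounded limits (strong convergence via dominated convergence), so that $P_W$ commutes with $M_{\ONE_{\{\lambda<m\le\lambda+n\}}}$. This must be written out; alternatively your closing suggestion (verify directly that $\lambda\mapsto M_{\ONE_{\{m\le\lambda\}}}$ is a spectral family whose operator is $H$, then invoke uniqueness) is legitimate but abandons the paper's characterization. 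The lighter repair is the paper's own argument, which needs no commutation and no maximality over arbitrary $W$: having shown $N_\lambda\subseteq\mathcal H^\lambda$, take $v\in\mathcal H^\lambda\cap D(H)$; then $\ONE_{\{m\le\lambda\}}v\in N_\lambda\subseteq\mathcal H^\lambda$, hence $\ONE_{\{m>\lambda\}}v=v-\ONE_{\{m\le\lambda\}}v\in\mathcal H^\lambda\cap D(H)$, and property (2) forces it to vanish a.e.; general $v\in\mathcal H^\lambda$ follows from property (1) (which makes $\mathcal H^\lambda\cap D(H)$ dense in $\mathcal H^\lambda$) together with the closedness of $N_\lambda$. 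That gives $\mathcal H^\lambda\subseteq N_\lambda$, which is all the proposition requires, whereas your route proves the stronger maximality statement only at the cost of the circular step.
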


\begin{proof}
Any $u\in L^2(\Omega)$ that vanishes on the set $\{m>\lambda\}$ is clearly an element of $E(\lambda)L^2(\Omega)$ by the definition of $E(\lambda)L^2(\Omega)$.
Conversely suppose that there is some $v\in E(\lambda) L^2(\Omega)\cap D(H)$ such that the set $A=\{x\in \Omega\ |\ v(x)\neq0,\ m(x)>\lambda\}$ has positive measure. Clearly this set cannot be the full support of $v,$ since in such a case $(Hv,v)_{L^2(\Omega)}>\lambda(v,v)_{L^2(\Omega)}.$ Thus, the set $B=\{x\in\Omega\ |\ v(x)\neq0,\ m(x)\leq\lambda\}$ has also positive measure. By the beginning of the proof, the restriction  $w=\chi_Bv$ of $v$ to $B$ (zero outside) is certainly in $E(\lambda) L^2(\Omega).$ It follows that also $v-w$ is in the subspace, and is supported in $A,$ where $v(x)-w(x)=v(x)\neq0$ and $m(x)>\l$.  But then $u=v-w$ satisfies $(Hu,u)_{L^2(\Omega)}>\lambda (u,u)_{L^2(\Omega)}$, in contradiction to the  definition of the spectral projection.
\end{proof}

\subsection{Direct integrals}\label{sec:dir-int}
Shear flows (and all regular flows, for that matter) admit a natural decomposition into fibers: each flow line is independent of any other flow line, and on it the shear flow operator reduces to a one-dimensional derivative. Hence, considering the (more general) weighted case, the space
	\be\label{def:weighted-space}
	L^2_w(\R^d)
	=
	\left\{
	f:\R^d\to\C\ \Big|\ \|f\|_w^2:=\int_{\R^d}|f(x)|^2w(x)\ dx<\infty
	\right\}
	\en
is decomposable as
	\be
	L_w^2(\R^d)
	=
	\int_{\R^{d-1}}^\oplus L_{w(\cdot,x')}^{2}(\R)\ dx',
	\en
where 
	\be
	 L_{w(\cdot,x')}^{2}(\R)
	=
	\left\{
	g:\R\to\C\ \Big|\ \|g\|_{w(\cdot,x')}^2:=\int_\R|g(s)|^2w(s,x')\ ds<\infty
	\right\}.
	\en
The shear flow operator $H_w:L^2_w(\R^d)\to L^2_w(\R^d)$ decomposes accordingly as
	\begin{equation}\label{eq:direct-int-decomposition}
	H_w=\int_{\R^{d-1}}^\oplus H_w^{x'}\ dx'
	\end{equation}
where
	\begin{equation}
	H_w^{x'}=-i\frac{\psi(x')}{w(\cdot,x')}\frac{d}{dx},\quad x'\in\R^{d-1}
	\end{equation}
is an operator $L_{w(\cdot,x')}^{2}(\R)\to L_{w(\cdot,x')}^{2}(\R)$. In Theorem \ref{thm:self-adjoint-weighted} we carefully state what are the domains of these operators. Naturally, this decomposition induces a relationship between the spectrum of each fiber $H_w^{x'}$ and the spectrum of $H_w$. Most importantly, the spectral family is related through
	\begin{equation}\label{eq:direct-int-spec-family}
	E_w(\lambda)
	=
	\int_{\R^{d-1}}^\oplus E^{x'}_w(\lambda)\ dx'
	\end{equation}
where $\{E^{x'}_w(\lambda)\}_{\lambda\in\R}$ is the spectral family of the operator $H_w^{x'}$ and the spectrum itself satisfies
	\be\label{eq:direct-int-eigenvalues}
	\lambda\in\Sigma(H_w)\quad\Leftrightarrow\quad\forall\epsilon>0,\ \mathcal{L}^{d-1}\left\{x'\ \big|\ \Sigma(H_w^{x'})\cap(\lambda-\epsilon,\lambda+\epsilon)\neq\emptyset\right\}>0
	\en
where $\mathcal L^{d-1}$ denotes the $d-1$ dimensional Lebesgue measure. These facts will be useful later. Another important observation, a corollary of equations \eqref{eq:direct-int-decomposition} and \eqref{eq:direct-int-eigenvalues}, is that if $S\subset\R^{d-1}$ is measurable and $S^c:=\R^{d-1}\setminus S$ is its complement, then
	\begin{equation}\label{eq:decomp-s}
	H_w=H_w^S\oplus H_w^{S^c}:=\int_{S}^\oplus H_w^{x'}\ dx'\oplus \int_{S^c}^\oplus H_w^{x'}\ dx',
	\end{equation}
where the measures on $S$ and its complement are the ones induced from the restriction of the Lebesgue measure on $\R^{d-1}$. Hence we can conclude in addition that
	\begin{equation}\label{eq:decomp-of-spectrum}
		\Sigma(H_w)=\Sigma(H_w^S)\cup\Sigma(H_w^{S^c}).
	\end{equation}
The notion of a direct integral is originally due to von Neumann \cite{VonNeumann1949}, who called it a  \emph{generalized direct sum}. We refer to \cite[\S 16]{Reed1978} or \cite{Mautner1948} as additional references.

\section{The spectrum in weighted-$L^2$ spaces}\label{sec:trans-op-weighted}
In this section we establish properties of transport operators in weighted spaces, and, in particular, adapt Corollary \ref{cor:derivative} to this situation. For simplicity, we first treat the simpler one-dimensional case, and then use our observations to prove Theorem \ref{thm:self-adjoint-weighted} for parallel flows in higher dimensions.

\subsection{The one-dimensional case}
Let $0<w\in L^1(\R)\cap L^\infty(\R)$ be a real-valued positive weight function, and let $L^2_w(\R)\supset L^2(\R)$ be the weighted space defined in \eqref{def:weighted-space} with $d=1$.
Given a self-adjoint operator $T_0:D(T_0)\subset L^2(\R)\to L^2(\R)$, it is natural to expect that the operator $T_w=w^{-1}T_0$ acting in $L^2_w(\R)$ with domain included in $D(T_0)$ is self-adjoint as well, or is at least essentially self-adjoint. As we show below this is not true in general. Though this is discouraging, we show that in the case of interest $T_0=T=-i\frac{d}{dx}$ there exists an appropriate choice of domain that allows for the extraction of a particular self-adjoint extension of $T_w$.

As we have already seen in Corollary \ref{cor:derivative} the operator $T=-i\frac{d}{dx}:C_0^\infty(\R)\subset L^2(\R)\to L^2(\R)$ is essentially self-adjoint. It is well known (basically by definition) that its unique self-adjoint extension, which we continue to denote by $T$, has  $H^1(\R)$ as its domain, where $H^1(\R)$ is the usual Sobolev space consisting of functions $f$ such that $f,f'\in L^2(\R)$, $f'$ being the distributional derivative.

\begin{theorem}\label{thm:weighted-1d-sa}
Consider the self-adjoint operator $T=-i\frac{d}{dx}:H^1(\R)\subset L^2(\R)\to L^2(\R)$. Define the sets
	$$
	D_0=\left\{f\in H^1(\R)\ \big|\ w^{-\frac12}Tf\in L^2(\R)\right\}
	$$
and
	$$
	D_1^\alpha=\left\{f\in L^2_w(\R)\ \Big| \ T_wf\in L^2_w(\R),\ \lim_{x\to\infty}f(x)=\alpha\lim_{x\to-\infty}f(x)\right\},\quad|\alpha|=1.
	$$
Then the symmetric and densely defined operator
	\be\label{eq:weighted-operator}
	T_w=w^{-1}T:D_0\subset L^2_w(\R)\to L^2_w(\R)
	\en
is \emph{not} essentially self-adjoint, but each of the one-parameter family of operators
	\be\label{eq:weighted-operator}
	T_w^\alpha=w^{-1}T:D_1^\alpha\subset L^2_w(\R)\to L^2_w(\R),\quad|\alpha|=1,
	\en
\emph{is} essentially self-adjoint. We denote by $H_w^\alpha$ the unique self-adjoint extension. The spectrum of $H_w^\alpha$ consists only of eigenvalues. If $\alpha=e^{i\beta},\ \beta\in[0,2\pi)$, then the eigenvalues are given by
	$$
	\lambda_k^\beta
	=
	\|w\|_{L^1(\R)}^{-1}(\beta+2\pi k),\quad k\in\Z.
	$$ 

\end{theorem}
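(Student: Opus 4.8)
The plan is to reduce the entire statement, via an explicit change of variables, to the classical spectral theory of the momentum operator $-i\,d/dy$ on the bounded interval $(0,L)$, where $L:=\|w\|_{L^1(\R)}$. Set $W(x):=\int_{-\infty}^x w(t)\,dt$. Since $0<w\in L^1(\R)$, the function $W$ is absolutely continuous, strictly increasing, and maps $\R$ bijectively onto $(0,L)$; since $W'=w>0$ a.e., the inverse $W^{-1}\colon(0,L)\to\R$ is absolutely continuous as well. Define $U\colon L^2_w(\R)\to L^2((0,L))$ by $Uf:=f\circ W^{-1}$; the substitution $y=W(x)$ turns $\int_\R|f|^2w\,dx$ into $\int_0^L|Uf|^2\,dy$, so $U$ is unitary. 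For $f\in H^1(\R)$ the chain rule gives $(Uf)'(y)=f'(W^{-1}(y))/w(W^{-1}(y))$, and substituting into $T_wf=w^{-1}(-if')$ yields the crucial identity $UT_wU^{-1}=-i\,d/dy=:S$. One then matches domains: since $H^1(\R)\hookrightarrow C_0(\R)$, we get $U(D_0)\subseteq\{g\in H^1(0,L):g(0)=g(L)=0\}$, while conversely $C_0^\infty((0,L))\subseteq U(D_0)$ because $W^{-1}$ carries compact subsets of $(0,L)$ to bounded subsets of $\R$; hence $T_w$ on $D_0$ is densely defined. Likewise, any $f\in L^2_w(\R)$ with $T_wf\in L^2_w(\R)$ has $Uf\in H^1(0,L)\hookrightarrow C([0,L])$, so its one-sided limits at $\pm\infty$ exist automatically and equal $(Uf)(0^+)$, $(Uf)(L^-)$; this gives the exact identification $U(D_1^\alpha)=\{g\in H^1(0,L):g(L)=\alpha\,g(0)\}$.

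\emph{Symmetry but not essential self-adjointness on $D_0$.} Symmetry of $T_w$ on $D_0$ — equivalently of $S$ on $U(D_0)$ — is an integration by parts whose boundary terms vanish because elements of $U(D_0)$ vanish at both endpoints. For the failure of essential self-adjointness I use the basic criterion, Proposition \ref{prop:self-adjoint-condition}: it suffices to produce one nonzero element of $L^2((0,L))$ orthogonal to $\operatorname{Ran}(S-i)$, and $u(y):=e^y$ works, since for $g\in U(D_0)$ an integration by parts using $g(0)=g(L)=0$ gives $\int_0^L(-ig'-ig)\,e^y\,dy=i\int_0^L g\,e^y\,dy-i\int_0^L g\,e^y\,dy=0$. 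Thus $\operatorname{Ran}(S-i)$, and hence $\operatorname{Ran}(T_w-i)$, is not dense, so $T_w$ on $D_0$ is not essentially self-adjoint.

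\emph{Self-adjointness of $T_w^\alpha$ and its spectrum.} On the domain $\{g\in H^1(0,L):g(L)=\alpha\,g(0)\}$ the operator $S$ is symmetric — the boundary term is $-i(|\alpha|^2-1)g_1(0)\overline{g_2(0)}=0$ — and it is in fact self-adjoint: solving $(S\mp i)g=h$ by an integrating factor, the endpoint condition determines $g(0)$ precisely because $|e^{\mp L}|\neq1=|\alpha|$ for $L>0$, so $\operatorname{Ran}(S\pm i)=L^2((0,L))$ and Proposition \ref{prop:self-adjoint-condition} applies; in particular $T_w^\alpha$ is already self-adjoint and $H_w^\alpha=T_w^\alpha$. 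Finally, $-ig'=\lambda g$ forces $g(y)=c\,e^{i\lambda y}$, and the condition $g(L)=\alpha\,g(0)=e^{i\beta}g(0)$ forces $e^{i\lambda L}=e^{i\beta}$, i.e. $\lambda L\in\beta+2\pi\Z$; conversely each $\lambda_k^\beta:=L^{-1}(\beta+2\pi k)$, $k\in\Z$, is an eigenvalue with eigenfunction $e^{i\lambda_k^\beta y}$. As $\{L^{-1/2}e^{i\lambda_k^\beta y}\}_{k\in\Z}$ is an orthonormal basis of $L^2((0,L))$ (a unimodular modulation of the standard Fourier basis) made up of eigenfunctions of $S$, the spectrum of $S$ — hence, transporting back by $U$, that of $H_w^\alpha$ — is the discrete set $\{\|w\|_{L^1(\R)}^{-1}(\beta+2\pi k):k\in\Z\}$ and consists only of eigenvalues.

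The one genuinely delicate step is the change of variables: making it rigorous under only the hypothesis $w\in L^1(\R)\cap L^\infty(\R)$ (absolute continuity of $W^{-1}$, validity of the chain rule for $f\circ W^{-1}$, and the exact matching of domains, in particular the automatic existence of one-sided limits at $\pm\infty$ for functions in the maximal domain). Once the unitary equivalence $UT_wU^{-1}=-i\,d/dy$ on $L^2((0,L))$ is established, the remainder is the textbook analysis of $-i\,d/dy$ on a bounded interval.
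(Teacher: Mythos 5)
Your proposal is correct, and it proves the theorem by a genuinely different route than the paper. The paper works directly in $L^2_w(\R)$: it identifies the adjoint of $T_w$ on $D_0$ as the maximal operator, exhibits the deficiency vectors $e^{\pm\int_0^x w(t)\,dt}$ (exactly your $e^{\pm y}$ pulled back through $W$), derives the boundary condition on the adjoint's domain by an integration by parts at $\pm\infty$ (using $w\in L^1$ to guarantee the limits exist), reads off the eigenvalues from the explicit eigenfunctions $e^{i\lambda\int_0^x w(t)\,dt}$, and finally shows the spectrum is purely point by proving $H_w^\alpha$ has compact resolvent (compact embedding of $D_1^\alpha$ into $L^2_w$ via Rellich plus control of the tails). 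You instead rectify the weight once and for all with the unitary $Uf=f\circ W^{-1}$ onto $L^2((0,L))$, $L=\|w\|_{L^1(\R)}$, and reduce everything to the classical operator $-i\,d/dy$ on a bounded interval with quasi-periodic boundary conditions $g(L)=\alpha g(0)$. What your approach buys: the deficiency computation and the surjectivity of $S\pm i$ become the standard interval calculations, and the statement that the spectrum consists only of eigenvalues follows at once from the fact that the eigenfunctions form a modulated Fourier orthonormal basis, so you avoid the compact-resolvent/Rellich argument altogether (arguably the least detailed step in the paper's proof). What it costs is exactly the point you flag: one must justify that $W^{-1}$ is (locally) absolutely continuous -- which holds because $w>0$ a.e.\ gives $W$ the property that sets of positive measure map to sets of positive measure -- that the chain rule applies to $f\circ W^{-1}$, and that the domains match ($U(D_1^\alpha)=\{g\in H^1(0,L):\,g(L)=\alpha g(0)\}$ exactly, and the two inclusions $C_0^\infty((0,L))\subseteq U(D_0)\subseteq\{g\in H^1(0,L):\,g(0)=g(L)=0\}$, which indeed suffice for density and for the orthogonality argument with $e^y$). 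These verifications all go through under the stated hypotheses $0<w\in L^1(\R)\cap L^\infty(\R)$. One cosmetic remark: your claim that $T_w^\alpha$ is already self-adjoint (not merely essentially so) uses closedness of the transported operator on its $H^1$-type domain; since the theorem only asserts essential self-adjointness, the density of $\operatorname{Ran}(S\pm i)$ that you establish is already enough. Note also that the paper itself remarks (Remark 3.3) that the $L^1$-weighted case with $\alpha=1$ is analogous to a flow on the circle, but it does not exploit this equivalence in the proof -- your argument is essentially a rigorous implementation of that remark for all $\alpha$.
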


\begin{proof}
It is clear that $T_w$ is symmetric, closed and densely defined on $D_0$. Before showing that it  is not essentially self-adjoint on the domain $D_0$, we remark that $D_0$ is natural to consider: indeed, the condition $w^{-\frac12}Tf\in L^2(\R)$ is equivalent to $T_wf=w^{-1}Tf\in L^2_w(\R)$. Moreover, for such functions $f$ the limits at $\pm \infty$ always exist (and are zero) since $\frac{df}{dx}\in L^1(\R)$ by an elementary Cauchy-Schwarz inequality using the fact that $w\in L^1(\R)$: $\|\frac{df}{dx}\|_{L^1(\R)}\leq\|w^{-\frac12}\frac{df}{dx}\|_{L^2(\R)}\|w\|_{L^1(\R)}<\infty$.

To show that $T_w$ is not essentially self-adjoint on the domain $D_0$ we show that the containment $T_w\subset T_w^*$ is \emph{proper}. Therefore our starting point is identifying the adjoint operator, which, in our case, is actually the maximal operator associated with the differential operator $-iw^{-1}\frac{d}{dx}$ acting in $L^2_w(\R)$. To do this, given $g\in L^2_w(\R)$ we look for $h\in L^2_w(\R)$ such that
	$$
	(T_wf,g)_{L^2_w(\R)}=(f,h)_{L^2_w(\R)},\quad\forall f\in D_0.
	$$
In particular, it holds that for any $f\in C_0^\infty(\R)$
	$$
	-i\int_\R\frac{d}{dx}f(x)\overline{g(x)}\ dx=\int_\R f(x)\overline{h(x)}w(x)\ dx
	$$
and we get (in the sense of distributions)
	$$
	\frac{d}{dx}g(x)=-i{h(x)}w(x).
	$$
Therefore
	$$
	\int_\R w(x)^{-1}\left|\frac{d}{dx}g(x)\right|^2\ dx=\int_\R|h(x)|^2w(x)\ dx<\infty.
	$$
We conclude that if the domain of $T_w$ is taken to be $D_0$ then the adjoint has domain
	$$
	D(T_w^*)=\left\{g\in L^2_w(\R) \ \Big| \ -iw^{-1}\frac{d}{dx}g\in L^2_w(\R)\right\}.
	$$
To show that $T_w$ is not essentially self-adjoint on $D_0$ we need to solve $(T_w^*\pm i)g=0$ (see Proposition \ref{prop:self-adjoint-condition}). Such solutions are given by
	$$
	g_\pm(x)=Ce^{\pm\int_0^xw(t) dt}.
	$$
Clearly $\pm w^{-1}g'_\pm=g_\pm\in L^2_w(\R)$ so that $g\in D(T_w^*)$. However $g\notin D_0$ since it does not decay. In fact, we have just shown that the deficiency index of $T_w$ with domain $D_0$ is $(1,1)$.

To show that $T_w$ is essentially self-adjoint on the domain $D_1^\alpha$ we let $g\in L^2_w(\R)$ and seek $h\in L^2_w(\R)$ such that
	\be\label{eq:int-by-parts2}
	(T_wf,g)_{L^2_w(\R)}=(f,h)_{L^2_w(\R)},\quad\forall f\in D_1^\alpha.
	\en
Note that $D_1^\alpha\supset D_0$ for every $\alpha$. As before, by taking $f$ to be a smooth, compactly supported test function we can conclude that $g$ is differentiable and $-iw^{-1}\frac{d}{dx}g\in L^2_w(\R)$. However $C_0^\infty(\R)$ is not a core. Let $f\in C^\infty(\R)$ be such that $\lim_{x\to\infty}f(x)=\alpha\lim_{x\to-\infty}f(x)$. The left hand side of \eqref{eq:int-by-parts2} becomes
	\begin{align*}
	(T_wf,g)_{L^2_w(\R)}
	&=
	-i\int_{-\infty}^\infty\frac{d}{dx}f(x)\overline{g(x)}\ dx\\
	&=
	-i\lim_{R\to\infty}\int_{-R}^R\frac{d}{dx}f(x)\overline{g(x)}\ dx\\
	&=
	i\lim_{R\to\infty}\left[\int_{-R}^Rf(x)\frac{d}{dx}\overline{g(x)}\ dx-f(R)\overline{g(R)}+f(-R)\overline{g(-R)}\right].
	\end{align*}
Since $-iw^{-1}\frac{d}{dx}g\in L^2_w(\R)$ all limits exist so that we obtain
	\begin{align*}
	(T_wf,g)_{L^2_w(\R)}
	&=
	i\left[\int_{\R}f(x)\frac{d}{dx}\overline{g(x)}\ dx-f(\infty)\overline{g(\infty)}+f(-\infty)\overline{g(-\infty)}\right]\\
	&=
	i\left[\int_{\R}f(x)\frac{d}{dx}\overline{g(x)}\ dx-\alpha f(-\infty)\overline{g(\infty)}+f(-\infty)\overline{g(-\infty)}\right]\\
	&=
	i\int_{\R}f(x)\frac{d}{dx}\overline{g(x)}\ dx-if(-\infty)\left(\alpha\overline{g(\infty)}-\overline{g(-\infty)}\right)
	\end{align*}
which must equal the right hand side of \eqref{eq:int-by-parts2}:
	$$
	i\int_{\R}f(x)\frac{d}{dx}\overline{g(x)}\ dx-if(-\infty)\left(\alpha\overline{g(\infty)}-\overline{g(-\infty)}\right)
	=
	\int_\R f(x)\overline{h(x)}w(x)\ dx,\quad \forall f\in D_1^\alpha.
	$$
For this equality to hold in general, $g$ must satisfy $\overline\alpha{g(\infty)}=g(-\infty)$, which becomes ${g(\infty)}=\alpha g(-\infty)$ by multiplying by $\alpha$ and recalling that $|\alpha|=1$. Hence we conclude that $g\in D_1^\alpha$, and therefore $T_w$ is essentially self-adjoint on $D_1^\alpha$.

To determine the spectrum we look for solutions of $H_w^\alpha f=\lambda f$. Such solutions have the form
	$$
	f(x)
	=
	Ce^{i\lambda\int_0^xw(t)dt}.
	$$
The condition $f(\infty)=\alpha f(-\infty)$ becomes (using the relation $\alpha=e^{i\beta}$)
	$$
	\lambda\int_0^\infty w(t)\ dt
	=
	\beta+\lambda\int_0^{-\infty}w(t)\ dt+2\pi k,\quad k\in\Z
	$$
so that we conclude
	\be\label{eq:1d-weighted-eigenvalues}
	\lambda_k^\beta
	=
	\|w\|_{L^1(\R)}^{-1}(\beta+2\pi k),\quad k\in\Z.
	\en

The fact that there are no additional points in the spectrum is due to $H_w^\alpha$ having compact resolvent. Indeed, let us show that $R^\alpha_w(z)=(H_w^\alpha-z)^{-1}$, where $z\in\C\setminus\Sigma(H_w^\alpha)$, is a compact operator $ L^2_w(\R)\to D_1^\alpha\subset L^2_w(\R)$. It suffices to show that the embedding $ D_1^\alpha\subset L^2_w(\R)$ is compact. Let $K\subset D_1^\alpha$ be a bounded set. All elements of $K$ are uniformly bounded at $\pm\infty$, and therefore for every $\epsilon>0$ there exists $M>0$ such that $\int_{|x|>M}|f(x)|^2w(x)\ dx<\epsilon$ for every $f\in K$. Concluding that $K$ is compact in $ L^2_w(\R)$ is standard, using Rellich's theorem on $|x|<M$ and the smallness of the tails on $|x|>M$.
\end{proof}

\begin{remark}
We note that if the weight $w$ has the uniform bounds $0<c<w(x)<C<\infty$ for a.e $x$ then a simple change of coordinates with a uniformly bounded Jacobian transforms $w$ to a constant weight, so that the spectrum is continuous on $\R$. However, if $w\notin L^1(\R)$ does not have such uniform bounds, then not only is it not clear what the spectrum is, it is not even clear how to define a domain for $T_w$ so that it is self-adjoint.
\end{remark}

\begin{remark}\label{rek:weighted-vs-periodic}
An important observation is that the weighted case (with an $L^1$ weight) with $\alpha=1$ is completely analogous to the case of a flow on the circle. However, due to the possibility of choosing $\alpha\neq1$, the weighted case is richer than the periodic case. This is demonstrated in Theorem \ref{thm:sing-cont}.
\end{remark}

\subsection{The multi-dimensional case}
In approaching the question of self-adjointness of transport operators on weighted spaces in higher dimensions there are two main routes. One may analyze these operators directly, attempting to find appropriate domains of definition so that essential self-adjointness ensues. Alternatively, one may view the $d$-dimensional case as being made up of a family of one-dimensional fibers. We shall pursue the latter alternative, following the ideas set by von Neumann \cite{VonNeumann1949}. We use the notation of Section \ref{sec:dir-int}.

\begin{definition}\label{def:confined}
We say that a (positive) bounded weight function $0<w\in L^\infty(\R^d)$ is \emph{confined} if there exists a set $S\subset\R^{d-1}$ such that
	\begin{enumerate}
	\item
	$w(x_1,x')=1$ for all $x'\in\R^{d-1}\setminus S$, and
	\item
	$\|w(\cdot,x')\|_{L^1(\R)}<+\infty$ for all $x'\in S$.
	\end{enumerate}
The set $S$ is called the \emph{confinement region}. If there exists $M>0$ such that $\|w(\cdot,x')\|_{L^1(\R)}<M$ for all $x'\in S$ we say that $w$ is \emph{$M$-confined}.
\end{definition}

Note that a-priori there is no restriction on the size of $S$: it may have zero or full measure in $\R^{d-1}$.

\begin{theorem}\label{thm:self-adjoint-weighted}
Let $\psi:\R^{d-1}\to\R$ be a measurable and locally bounded function of $x'=(x_2,\dots,x_d)$ and let $w$ be a confined weight. Fix a measurable function $\alpha:\R^{d-1}\to\C$ with $|\alpha|\equiv1$. Define the family of essentially self-adjoint operators
	$$
	H_w^{x'}=-i\frac{\psi(x')}{w(\cdot,x')}\frac{d}{dx}: D^{\alpha(x')}\subset L^2_{w(\cdot,x')}(\R) \to  L^2_{w(\cdot,x')}(\R),\quad x'\in\R^{d-1},
	$$
where
	$$
	D^{\alpha(x')}=\left\{g\in L^2_{w(\cdot,x')}(\R)\ \Big| \ H_w^{x'}g\in L^2_{w(\cdot,x')}(\R),\ \lim_{x\to\infty}g(x)=\alpha(x')\lim_{x\to-\infty}g(x)\right\}.
	$$
We keep the same notation for their unique self-adjoint extension. Then the shear flow operator $H_w=-i\frac{\psi}{w}\frac{\p}{\p x_1}: L^2_w(\R^d)\to L^2_w(\R^d)$ may be represented as the fibered direct sum
	$$
	H_w=\int_{\R^{d-1}}^\oplus H_w^{x'}\ dx'
	$$
and is self-adjoint with domain
	\be\label{eq:weighted-domain}
	D_w
	=
	\left\{
	f\in L^2_w(\R^d)\ \Big|\ f(\cdot,x')\in D^{\alpha(x')} \text{ a.e. }x'\in\R^{d-1}, \ \int_{\R^{d-1}}\|H_w^{x'}f(\cdot,x')\|_{L^2_{w(\cdot,x')}(\R)}^2\ dx'<\infty
	\right\}.
	\en
Moreover, the spectrum $\Sigma(H_w)$ is characterized as follows: 
	\be\label{eq:weighted-eigenvalues}
	\lambda\in\Sigma(H_w)\quad\Leftrightarrow\quad\forall\epsilon>0,\ \mathcal{L}^{d-1}\left\{x'\ \big|\ \Sigma(H_w^{x'})\cap(\lambda-\epsilon,\lambda+\epsilon)\neq\emptyset\right\}>0
	\en
where $\mathcal L^{d-1}$ denotes the $d-1$ dimensional Lebesgue measure.
\end{theorem}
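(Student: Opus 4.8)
The plan is to assemble Theorem~\ref{thm:self-adjoint-weighted} from three ingredients already available: the one-dimensional analysis of Theorem~\ref{thm:weighted-1d-sa}, the abstract theory of direct integrals from Section~\ref{sec:dir-int}, and the confinement hypothesis of Definition~\ref{def:confined}. First I would verify the fiberwise claims. For $x'\in S$ the weight $w(\cdot,x')$ is positive, bounded and in $L^1(\R)$, so Theorem~\ref{thm:weighted-1d-sa} applies directly (with the trivial rescaling by the constant $\psi(x')$, which is harmless since $\psi$ is locally bounded and we only need it fiberwise): $H_w^{x'}$ on $D^{\alpha(x')}$ is essentially self-adjoint with pure point spectrum $\psi(x')\|w(\cdot,x')\|_{L^1}^{-1}(\beta(x')+2\pi k)$. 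For $x'\notin S$ we have $w(\cdot,x')\equiv1$, so $D^{\alpha(x')}$ with the boundary condition at infinity reduces (for $L^2$ functions, whose limits at $\pm\infty$ vanish) to the standard domain $H^1(\R)$, and $H_w^{x'}=-i\psi(x')\,d/dx$ is essentially self-adjoint by Corollary~\ref{cor:derivative}, with purely absolutely continuous spectrum equal to all of $\R$ (or $\{0\}$ if $\psi(x')=0$). In either case each fiber operator is a well-defined self-adjoint operator on $L^2_{w(\cdot,x')}(\R)$.

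Next I would invoke the direct-integral machinery. The measurability hypotheses on $\psi$ and $\alpha$ (and hence $\beta$) ensure that $x'\mapsto H_w^{x'}$ is a measurable field of self-adjoint operators in the sense required to form $\int^\oplus_{\R^{d-1}}H_w^{x'}\,dx'$; the standard theorem (e.g.\ \cite[\S16]{Reed1978} or \cite{VonNeumann1949}) then gives that this direct integral is self-adjoint on precisely the domain consisting of $f$ with $f(\cdot,x')\in D^{\alpha(x')}$ a.e.\ and $\int\|H_w^{x'}f(\cdot,x')\|^2\,dx'<\infty$, which is exactly $D_w$ as written in \eqref{eq:weighted-domain}. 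It remains to check that this direct integral actually \emph{is} the operator $H_w=-i\frac\psi w\frac{\p}{\p x_1}$. This is the content of \eqref{eq:direct-int-decomposition} in Section~\ref{sec:dir-int}: on the decomposable space $L^2_w(\R^d)=\int^\oplus L^2_{w(\cdot,x')}(\R)\,dx'$ the operator of differentiation in $x_1$ multiplied by the $x'$-dependent multiplier $-i\psi/w$ acts fiberwise as $H_w^{x'}$; I would note that $H_w$ and $\int^\oplus H_w^{x'}\,dx'$ agree on the common core of smooth compactly supported functions and, both being self-adjoint (the latter by the direct-integral theorem just cited), must coincide.

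Finally, for the spectral characterization \eqref{eq:weighted-eigenvalues}, I would simply quote \eqref{eq:direct-int-eigenvalues} from Section~\ref{sec:dir-int}, which is the general statement that for a direct integral $\lambda\in\Sigma(H_w)$ iff for every $\epsilon>0$ the set of $x'$ with $\Sigma(H_w^{x'})\cap(\lambda-\epsilon,\lambda+\epsilon)\neq\emptyset$ has positive $\mathcal L^{d-1}$-measure. One direction is the observation that if this set of fibers has positive measure then there exist approximate eigenfunctions supported on a positive-measure bundle of fibers, so $\lambda$ is in the approximate point spectrum; the converse uses that $\|(H_w-\lambda)^{-1}\|=\esssup_{x'}\|(H_w^{x'}-\lambda)^{-1}\|=\esssup_{x'}\dist(\lambda,\Sigma(H_w^{x'}))^{-1}$, which blows up exactly when $\lambda$ is within $\epsilon$ of $\Sigma(H_w^{x'})$ for a positive-measure set of $x'$ for every $\epsilon$.

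The only genuinely delicate point is the measurability and the verification that the abstract direct integral coincides with the concrete differential operator $H_w$ — that is, ensuring the hypotheses of the direct-integral self-adjointness theorem are met by this particular measurable field and that no domain subtleties are lost in passing between the fibered description and the $d$-dimensional operator. Everything else is either a citation (Corollary~\ref{cor:derivative}, Theorem~\ref{thm:weighted-1d-sa}, the direct-integral theory) or the routine bookkeeping of splitting into the regions $S$ and $\R^{d-1}\setminus S$; in particular the confinement condition is used precisely to guarantee that Theorem~\ref{thm:weighted-1d-sa} (which requires an $L^1$ weight) is applicable on every fiber where $w$ is not identically $1$.
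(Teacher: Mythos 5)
Your proposal follows essentially the same route as the paper: the paper offers no self-contained proof of this theorem, but assembles it exactly as you do, from the one-dimensional fiber analysis of Theorem~\ref{thm:weighted-1d-sa}, the direct-integral formalism of Section~\ref{sec:dir-int}, and a citation of \cite[Theorem XIII.85]{Reed1978} ``for an essentially similar statement, including proof''; your fiberwise verification (Theorem~\ref{thm:weighted-1d-sa} on $S$, Corollary~\ref{cor:derivative} off $S$), the domain \eqref{eq:weighted-domain} from the direct-integral theorem, and the spectral characterization \eqref{eq:weighted-eigenvalues} via the resolvent-norm/essential-range argument are precisely that argument. One caveat: your justification that the direct integral coincides with $H_w$ because ``both agree on the common core of smooth compactly supported functions and both are self-adjoint'' is not valid as stated, since $C_0^\infty$ is \emph{not} a core for the fibers over $S$ --- indeed the content of Theorem~\ref{thm:weighted-1d-sa} is that the mutually distinct self-adjoint extensions $T_w^\alpha$ all agree on $C_0^\infty$, so agreement there cannot single out the extension. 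The identification should instead be read as the definition of the self-adjoint realization of the formal expression $-i\frac{\psi}{w}\frac{\p}{\p x_1}$ (which a priori carries no distinguished domain): on $D_w$ the direct integral acts, fiber by fiber, as that differential expression, which is all the representation claim asserts, and with that reading your argument matches the paper's.
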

We refer to \cite[Theorem XIII.85]{Reed1978} for an essentially similar statement, including proof. Define $\beta:\R^{d-1}\to[0,2\pi)$ by the relation $\alpha(x')=e^{i\beta(x')}$. As we have proved in Theorem \ref{thm:weighted-1d-sa}, for any $x'\in\R^{d-1}$ for which $w(\cdot,x')$ is integrable, the operator $H_w^{x'}$ has pure point spectrum, and its eigenvalues are
	\begin{equation}\label{eq:eig-fiber}
	\lambda_k^{\beta(x')}
	=
	\|w(\cdot,x')\|_{L^1(\R)}^{-1}(\beta(x')+2\pi k),\quad k\in\Z.
	\end{equation}
Recalling the decomposition \eqref{eq:decomp-s} of $H_w$ into $H_w^S\oplus H_w^{S^c}$, we have:

\begin{corollary}\label{cor:spectrum-confined}
Under the assumptions of Theorem \ref{thm:self-adjoint-weighted} and assuming in addition that $\alpha(x')=\alpha=e^{i\beta}$ is constant, that the confinement region $S\subset\R^{d-1}$ is open and connected, and that $\|w(\cdot,x')\|_{L^1(\R)}$ is continuous in $S$ and satisfies
		\begin{equation*}
		\inf_{x'\in S}\|w(\cdot,x')\|_{L^1(\R)}=m,\qquad
		\sup_{x'\in S}\|w(\cdot,x')\|_{L^1(\R)}=M,
		\end{equation*}
then:
	\begin{enumerate}
	\item
	If $0<m=M<\infty$ then the spectrum of $H_w^S$ consists solely of a discrete part:
		\begin{equation*}
		\Sigma(H_w^S)=\frac{\beta}{m}+\frac{2\pi}{m}\Z.
		\end{equation*}
	\item
	If $0<m<M<+\infty$ then the spectrum of $H_w^S$ is:
		\begin{equation*}
		\Sigma(H_w^S)=\left(\bigcup_{-k\in\N}\left[\frac{\beta+2\pi k}{m},\frac{\beta+2\pi k}{M}\right]\right)
		\cup
		\left(\bigcup_{k\in\N\cup\{0\}}\left[\frac{\beta+2\pi k}{M},\frac{\beta+2\pi k}{m}\right]\right)
		\end{equation*}
	\item
	If $0=m<M<+\infty$ then the spectrum of $H_w^S$ is:
		\begin{equation*}
		\Sigma(H_w^S)=
		\begin{cases}\left(-\infty,-\frac{2\pi}{M}\right]
		\cup
		\{0\}
		\cup
		\left[\frac{2\pi}{M},+\infty\right)&\beta=0,\\
		\left(-\infty,\frac{\beta-2\pi}{M}\right]
		\cup
		\left[\frac{\beta}{M},+\infty\right)&\beta\neq0.
		\end{cases}
		\end{equation*}
	\item
	If $0\leq m<M=\infty$ then the spectrum of $H_w^S$ is the entire real line.
	\end{enumerate}
\end{corollary}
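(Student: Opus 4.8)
The plan is to strip off the direct-integral structure first, reducing the problem to a one-dimensional question about the ranges of the fiber eigenvalues, and only then carry out the four-way case analysis. Write $\ell(x'):=\|w(\cdot,x')\|_{L^1(\R)}$ and, since $\alpha\equiv e^{i\beta}$ is constant, set $g_k(x'):=\ell(x')^{-1}(\beta+2\pi k)$ for $k\in\Z$. By Theorem \ref{thm:self-adjoint-weighted} and the eigenvalue formula \eqref{eq:eig-fiber}, every fiber with $x'\in S$ has pure point spectrum $\Sigma(H_w^{x'})=\{g_k(x'):k\in\Z\}$, and the direct-integral characterization of the spectrum applied to $H_w^S=\int_S^\oplus H_w^{x'}\,dx'$ (cf. \eqref{eq:decomp-s}, \eqref{eq:direct-int-eigenvalues}, \eqref{eq:weighted-eigenvalues}) says that $\lambda\in\Sigma(H_w^S)$ iff for every $\epsilon>0$ the (open, hence measurable) set $\{x'\in S:\Sigma(H_w^{x'})\cap(\lambda-\epsilon,\lambda+\epsilon)\neq\emptyset\}=\bigcup_k g_k^{-1}((\lambda-\epsilon,\lambda+\epsilon))$ has positive $\mathcal{L}^{d-1}$-measure. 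Each $g_k$ is continuous on $S$, because $\ell$ is (by hypothesis) and $\ell>0$; so if $\lambda=g_k(x_0')$ for some $k$ and some $x_0'\in S$, then $g_k^{-1}((\lambda-\epsilon,\lambda+\epsilon))$ is a nonempty open subset of $S\subset\R^{d-1}$, hence of positive measure, which gives $\bigcup_{k\in\Z}g_k(S)\subset\Sigma(H_w^S)$, and closedness of the spectrum upgrades this to $\overline{\bigcup_k g_k(S)}\subset\Sigma(H_w^S)$. Conversely, if $\lambda\notin\overline{\bigcup_k g_k(S)}$ then for small $\epsilon$ the interval $(\lambda-\epsilon,\lambda+\epsilon)$ misses every $g_k(S)$, hence misses $\Sigma(H_w^{x'})\subset\bigcup_k g_k(S)$ for every $x'\in S$, so the set above is empty and $\lambda\notin\Sigma(H_w^S)$. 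Thus
\[
\Sigma(H_w^S)=\overline{\bigcup_{k\in\Z}g_k(S)}.
\]

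Next I would identify the sets $g_k(S)$. Since $S$ is connected and $\ell$ continuous and positive, $\ell(S)$ is an interval in $(0,\infty)$ with $\inf\ell(S)=m$ and $\sup\ell(S)=M$; composing with the continuous strictly decreasing map $t\mapsto 1/t$ shows $J:=\{\ell(x')^{-1}:x'\in S\}$ is an interval in $(0,\infty)$ with $\inf J=1/M$ and $\sup J=1/m$ (reading $1/\infty=0$, $1/0=\infty$). Hence $g_k(S)=(\beta+2\pi k)J$ is an interval with endpoints $(\beta+2\pi k)/M$ and $(\beta+2\pi k)/m$, taken in that order when $\beta+2\pi k>0$ (i.e. $k\geq0$, unless $\beta=0=k$, in which case $g_0(S)=\{0\}$) and in the reverse order when $\beta+2\pi k<0$ (i.e. $k\leq-1$).

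It then remains to compute $\overline{\bigcup_k g_k(S)}$ and match it to (1)--(4). If $m=M$ then $\ell(S)=\{m\}$, so $g_k(S)=\{(\beta+2\pi k)/m\}$ and the union is the discrete closed set $\frac{\beta}{m}+\frac{2\pi}{m}\Z$, which is (1). If $0<m<M<\infty$, then $\overline{g_k(S)}=[\frac{\beta+2\pi k}{M},\frac{\beta+2\pi k}{m}]$ for $k\geq0$ and $[\frac{\beta+2\pi k}{m},\frac{\beta+2\pi k}{M}]$ for $k\leq-1$, and one checks that $\bigcup_k\overline{g_k(S)}$ is already closed, hence equals $\overline{\bigcup_k g_k(S)}$; this gives the formula in (2). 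If $0=m<M<\infty$, then $\sup J=\infty$, each $\overline{g_k(S)}$ is a half-line ($[\frac{\beta+2\pi k}{M},\infty)$ for $k\geq0$, $(-\infty,\frac{\beta+2\pi k}{M}]$ for $k\leq-1$), the positive ones union to $[\frac{\beta}{M},\infty)$ (to $\{0\}\cup[\frac{2\pi}{M},\infty)$ when $\beta=0$) and the negative ones to $(-\infty,\frac{\beta-2\pi}{M}]$ (to $(-\infty,-\frac{2\pi}{M}]$ when $\beta=0$), which is (3). If $M=\infty$, then $\inf J=0$, so for each $k\geq1$ the set $\overline{g_k(S)}$ contains $0$ and reaches up to $\frac{\beta+2\pi k}{m}$ (which $\to\infty$, and is $\infty$ already if $m=0$), and symmetrically for $k\leq-1$, so the union exhausts $\R$, which is (4).

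The interval arithmetic in the last two paragraphs is elementary; the one genuinely delicate point is the claim that the union of closed intervals written in each displayed formula is itself closed, so that it really coincides with the closure $\overline{\bigcup_k g_k(S)}$ coming out of the first step rather than being strictly smaller. For $m>0$ this rests on the observation that near any fixed $\lambda$ only finitely many of the intervals $g_k(S)$ come close (consecutive intervals overlap once $|\beta+2\pi k|\geq\frac{2\pi m}{M-m}$, leaving at most finitely many gaps on each side), so the family $\{\overline{g_k(S)}\}$ is locally finite; for $m=0$ or $M=\infty$ the relevant rays are nested, so again no spurious limit points appear. A secondary bookkeeping nuisance is tracking the orientation flip of $g_k(S)$ with the sign of $\beta+2\pi k$, and the degenerate term $g_0(S)=\{0\}$ when $\beta=0$.
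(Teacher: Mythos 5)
Your argument is correct and follows the route the paper itself intends: combine the fiber eigenvalue formula \eqref{eq:eig-fiber} with the direct-integral spectral characterization \eqref{eq:weighted-eigenvalues} restricted to $S$ (via \eqref{eq:decomp-s}), then use openness and connectedness of $S$ together with continuity and positivity of $x'\mapsto\|w(\cdot,x')\|_{L^1(\R)}$ to identify $\Sigma(H_w^S)$ as the closure of $\bigcup_k g_k(S)$ with each $g_k(S)$ an interval with endpoints $(\beta+2\pi k)/M$ and $(\beta+2\pi k)/m$. The paper omits these details (deferring to the analogous periodic-flow argument in the cited reference), and your write-up — including the needed check, via local finiteness for $m>0$ and nestedness otherwise, that the displayed unions of closed intervals are themselves closed — supplies exactly the omitted verification.
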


This simple result is completely analogous to the case of a periodic incompressible flow, with minimal and maximal periods $T_{min}$ and $T_{max}$ corresponding to $m$ and $M$. We refer to \cite{Cox2013a} for a recent result concerning such flows, including analogous proof.

\begin{remark}
There is no guarantee that the spectra appearing in parts (2), (3) and (4) of Corollary \ref{cor:spectrum-confined} are absolutely continuous. Indeed, if for example $S\ni x'\mapsto \|w(\cdot,x')\|_{L^1(\R)}\in\R$ is constant on some subset $S'\subset S$ of positive measure, there will be an embedded eigenvalue.
\end{remark}

As mentioned in Remark \ref{rek:weighted-vs-periodic}, the presence of the parameter $\alpha(x')$ provides more flexibility that does not exist in the periodic case. Let us demonstrate this:

\begin{theorem}\label{thm:sing-cont}
There exists a choice of $\alpha(x')=e^{i\beta(x')}$ for which the unique self-adjoint extension of the operator $H_w:D^{\alpha(x')}\subset L^2_w(\R^d)\to L^2_w(\R^d)$ has a spectrum which is purely singular continuous.
\end{theorem}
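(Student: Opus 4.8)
The plan is to exhibit an explicit confined weight and angle datum so that every fibre $H_w^{x'}$ has pure point spectrum, yet the fibered direct integral of Theorem~\ref{thm:self-adjoint-weighted} has neither eigenvalues nor absolutely continuous spectrum. Fix $d\geq2$, take $\psi\equiv1$, and let $w(x_1,x')=g(x_1)$ where $0<g\in L^1(\R)\cap L^\infty(\R)$ with $\|g\|_{L^1(\R)}=1$ (say a Gaussian). Then $w$ is $M$-confined with confinement region $S=\R^{d-1}$, so for \emph{every} $x'$ the fibre $H_w^{x'}=-ig^{-1}\,d/dx$ with boundary condition $\lim_{+\infty}=\alpha(x')\lim_{-\infty}$ is one of the essentially self-adjoint operators of Theorem~\ref{thm:weighted-1d-sa}, with pure point spectrum $\Sigma(H_w^{x'})=\beta(x')+2\pi\Z$ by \eqref{eq:eig-fiber}. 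It is crucial that $S$ be all of $\R^{d-1}$: on any positive-measure region where $w\equiv1$ the fibres would be $-i\psi(x')\,d/dx$ on the unweighted line, which would contribute absolutely continuous spectrum and destroy singularity.

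\emph{Choice of $\beta$.} Let $K\subset(0,2\pi)$ be a Cantor-type set (compact and Lebesgue-null) and let $\mu_K$ be a non-atomic Borel probability measure with $\supp\mu_K=K$; note $K$ is then automatically perfect. Realize $\mu_K$ as a push-forward of Lebesgue measure: there is a Borel map $c:[0,1)\to K$ with $c_*\mathcal{L}^1=\mu_K$ — concretely, $c$ sends $t=\sum_{n\geq1}b_n2^{-n}$ to the point of $K$ with the corresponding ternary digits. Define $\beta(x')=c(x_2-\floor{x_2})$, the composition of $c$ with the fractional-part map in the first coordinate $x_2$ of $x'$; then $\beta:\R^{d-1}\to[0,2\pi)$ is measurable and $K$-valued, and (i) for every nonempty relatively open $V\subset K$ the set $\{x':\beta(x')\in V\}$ has infinite $\mathcal{L}^{d-1}$-measure (because $c^{-1}(V)$ has Lebesgue measure $\mu_K(V)>0$), so the essential range of $\beta$ equals $K$; (ii) for every $\lambda\in\R$ the set $\{x':\beta(x')\equiv\lambda\pmod{2\pi}\}$ is $\mathcal{L}^{d-1}$-null, because $c^{-1}$ of a point is Lebesgue-null. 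Put $\alpha(x')=e^{i\beta(x')}$; this is an admissible datum, so by Theorem~\ref{thm:self-adjoint-weighted} the operator $H_w$ is self-adjoint on the domain $D_w$.

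\emph{Reading off the spectral type.} By \eqref{eq:weighted-eigenvalues} together with (i), $\lambda\in\Sigma(H_w)$ if and only if every neighbourhood of $\lambda$ meets $\beta(x')+2\pi\Z$ on a positive-measure set of $x'$, i.e. if and only if $\lambda\bmod 2\pi\in K$; hence $\Sigma(H_w)=K+2\pi\Z$, a countable union of translates of a null set, so $\mathcal{L}^1(\Sigma(H_w))=0$. Therefore the absolutely continuous subspace is trivial: any $f$ in it has spectral measure $\mu_f\ll\mathcal{L}^1$ supported in $\Sigma(H_w)$, forcing $\mu_f=0$ and $f=0$. And $H_w$ has no eigenvalues: if $H_wf=\lambda f$ with $f\neq0$ then, fibrewise, $H_w^{x'}f(\cdot,x')=\lambda f(\cdot,x')$ for a.e.\ $x'$ with $f(\cdot,x')\neq0$ on a positive-measure set of $x'$, so $\lambda\in\Sigma(H_w^{x'})$ there, contradicting (ii). Since $L^2_w(\R^d)=\mathcal{H}_{pp}\oplus\mathcal{H}_{ac}\oplus\mathcal{H}_{sc}$ and the first two summands vanish while the total space does not, $\mathcal{H}_{sc}=L^2_w(\R^d)$ and $\Sigma(H_w)=\Sigma_{sc}(H_w)\neq\emptyset$: the spectrum is purely singular continuous.

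\emph{Main obstacle.} The only genuinely non-routine step is the construction of $\beta$: one needs a measurable function whose essential range is a prescribed perfect null set while every single value is attained only on a null set. Both demands are met simultaneously by pushing Lebesgue measure onto the non-atomic Cantor measure through the digit map $c$; everything else is bookkeeping with the direct-integral characterization \eqref{eq:weighted-eigenvalues} and the $\mathcal{H}_{pp}\oplus\mathcal{H}_{ac}\oplus\mathcal{H}_{sc}$ decomposition. One should also check the trivial points that this $w$ lies in $L^\infty(\R^d)$, is strictly positive, and that $H_w$ is densely defined, all of which are immediate.
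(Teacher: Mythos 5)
Your proof is correct and follows essentially the same route as the paper: a weight depending only on $x_1$ (so every fibre is confined with the same $L^1$ mass) together with an angle function $\beta$ valued in a Lebesgue-null Cantor set, so that by \eqref{eq:weighted-eigenvalues} and \eqref{eq:eig-fiber} the spectrum is contained in a null set, killing the absolutely continuous part, while the fibrewise eigenvalue picture kills the point part. The one place you go beyond the paper is worth keeping: the paper asserts that \emph{any} $\beta$ with $\operatorname{Ran}(\beta)=\Ccal$ works, which is not quite accurate, since a $\beta$ attaining a single value of $\Ccal$ on a set of positive $\mathcal{L}^{d-1}$-measure produces an embedded eigenvalue of infinite multiplicity; your pushforward construction, which makes the distribution of $\beta$ non-atomic with essential range $\Ccal$, supplies exactly the condition needed to exclude point spectrum and to identify $\Sigma(H_w)$ as the Cantor set plus the lattice.
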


\begin{proof}
Let the weight $w=w(x_1)$ depend only on $x_1$, and assume that $\int|w(x_1)|\ dx_1=W<+\infty$. Then in this case $S=\R^{d-1}$ and equations \eqref{eq:weighted-eigenvalues} and \eqref{eq:eig-fiber} combined provide a full characterization of the spectrum $\Sigma(H_w)$. Let $\Ccal$ be some Cantor set contained in $(0,\frac{\pi}{W})$, and without loss of generality assume that $\frac{\pi}{W}\leq2\pi$. Then any choice of $\beta(x')$ such that $\operatorname{Ran}(\beta)=\Ccal$ produces such an operator. The spectrum in this case is purely singular continuous:
	\begin{equation*}
	\Sigma(H_w)
	=
	\Ccal+\frac{2\pi}{W}\Z.
	\end{equation*}
\end{proof}

\section{The density of states}\label{sec:parallel}


We exploit the explicit and simple unitary relationship between differential operators and multiplication operators provided by the Fourier transform, to estimate the density of states of shear flows.

\subsection{The $L^2$ case}\label{sec:parallel-unweighted}

To set ideas, we start with the simple operator
	$$
	T_{1}=-i\frac{\p}{\p x_1}:C_0^\infty(\R^d)\subset L^2(\R^d)\to L^2(\R^d).
	$$
We continue to denote its unique self-adjoint extension by $T_1$. Its domain is $D(T_1)=\int_{\R^{d-1}}^\oplus H^1(\R)$. We refer to \cite[V-\S3.3, Example 3.14]{Kato1995} for further discussion. Denote its spectral family by $\{E(\l)\}_{\lambda\in\R}$ and let
	\be\label{eq:fourier}
	\Fcal_1[r](\xi_1,x_2,\dots,x_d):=\frac{1}{\sqrt{2\pi}}\int_{-\infty}^\infty r(x_1,x_2,\dots,x_d)e^{-ix_1\xi_1}dx_1
	\en
be the partial Fourier transform with respect to the first variable.
It is well known that $\mathcal F_1:L^2(\R^d)\to L^2(\R^d)$ is a unitary operator relating $\mathcal F_1 T_1\mathcal F_1^{-1}=m_1$ where $m_1$ is the multiplication operator
	$$m_1(\xi_1,x_2,\dots,x_d)=\xi_1$$
acting in $L^2_{\xi_1,x_2,\dots,x_d}(\R^d)$.
Their spectral families are therefore related by the same unitary equivalence. This fact allows us to express $E(\l)$ using the simple expression given in \eqref{eq:spec-meas} for multiplication operators:
	$$
	\|E(\l)f\|^2_{L^2(\R^d)}=\int_{\R^{d-1}}\int_{\xi_1\leq\lambda}|\mathcal F_1[f](\xi_1,x_2,\dots,x_d)|^2\;d\xi_1 dx_2\cdots dx_d
	$$
where $f\in D(T_1)$.
Equivalently, we may write in bilinear form: given $f,g\in D(T_1)$ the spectral measure satisfies
	\be\label{eq:flat-meas}
	\left(E(\l)f,g\right)_{L^2(\R^d)}
	=
	\int_{\R^{d-1}}\int_{\xi_1\leq\lambda}\Fcal_1[f](\xi_1,x_2,\dots,x_d)\overline{\Fcal_1[g](\xi_1,x_2,\dots,x_d)}\;d\xi_1dx_2\cdots dx_d.
	\en
Therefore, whenever the mapping $\lambda\mapsto\left(E(\l)f,g\right)_{L^2(\R^d)}$ is differentiable, the density of states is given by
	\be\label{eq:flat-dens}
	\frac{d}{d\lambda}\Big|_{\l=\l_0}\left(E(\l)f,g\right)_{L^2(\R^d)}
	=
	\int_{\R^{d-1}}\Fcal_1[f](\lambda_0,x_2,\dots,x_d)\overline{\Fcal_1[g](\lambda_0,x_2,\dots,x_d)}\;dx_2\cdots dx_d.
	\en
In order to make sense of the last equation, we use the trace theorem for half-spaces \cite[Theorem 9.4]{Lions1972}. It is sufficient to assume that $\Fcal_1[f],\,\Fcal_1[g]\in H^{\sigma}(\R;L^2(\R^{d-1})),$ the Sobolev space of order $\sigma>\frac12,$ valued in $L^2(\R^{d-1}).$ In turn, by definition, this means that $f,g\in L^{2,\sigma}(\R;L^2(\R^{d-1}))$, where
	\be\label{eq:l2sigma}
	L^{2,\sigma}(\R;L^2(\R^{d-1}))
	=
	\left\{r:\R^d\to\C\quad\Big|\quad\int_{\R^d}(1+x_1^2)^{\sigma}|r(x)|^2dx<\infty\right\}.
	\en
That is, in view of the discussion of the density of states in Subsection \ref{sec:dens-states}, $L^{2,\sigma}\subset L^2(\R^d)$ is the subspace on which there is an explicit estimate for the density of states of $T_1$. The absolutely continuous subspace is, in fact, $L^2(\R^d)$ itself (see Proposition \ref{prop:abs-cont} below for a precise and more general statement). For brevity we shall denote
	$$
	\mathcal X^\sigma:=L^{2,\sigma}(\R;L^2(\R^{d-1}))
	$$
for the remainder of this section. Now we turn our attention to shear flows, as defined in equation \eqref{eq:shear-flow}.
\begin{proposition}\label{prop:dilation}
The shear flow $H_1$ is unitarily equivalent to the uniform flow $T_1$.
\end{proposition}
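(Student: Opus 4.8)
\medskip\noindent
The plan is to straighten the variable flow speed into unit speed by dilating the $x_1$-variable, on each flow line, by the factor $\psi(x')$ (which is constant along that line since $\psi$ depends only on $x'$). Concretely I would set
\[
U\colon L^2(\R^d)\to L^2(\R^d),\qquad (Uf)(x_1,x')=\psi(x')^{1/2}\,f\bigl(\psi(x')x_1,\,x'\bigr),
\]
so that $(U^{-1}g)(x_1,x')=\psi(x')^{-1/2}\,g\bigl(x_1/\psi(x'),x'\bigr)$. In the direct-integral language of Section~\ref{sec:dir-int}, $U=\int_{\R^{d-1}}^\oplus u_{x'}\,dx'$ is decomposable, with fibre the ordinary $L^2(\R)$-dilation $u_{x'}g=\psi(x')^{1/2}g(\psi(x')\,\cdot\,)$; since $0<\psi\in L^\infty(\R^{d-1})$ is measurable, $\{u_{x'}\}$ is a measurable field of unitaries. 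The first step is to check that $U$ is unitary: the fibrewise substitution $y_1=\psi(x')x_1$ gives $\|Uf\|_{L^2(\R^d)}=\|f\|_{L^2(\R^d)}$ at once, and $U^{-1}$ is its two-sided inverse. (It is cleanest to define $U$ first on continuous compactly supported functions and extend by this isometry, which also sidesteps any measurability question about the composition $f(\psi(x')x_1,x')$.)

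The second step is the intertwining identity. For $f\in C_0^\infty(\R^d)$ the chain rule gives $\partial_{x_1}(Uf)(x_1,x')=\psi(x')^{3/2}(\partial_1 f)(\psi(x')x_1,x')$, so that $T_1 Uf=-i\,\partial_{x_1}(Uf)$ and $U H_1 f=\psi(x')^{1/2}(H_1 f)(\psi(x')x_1,x')=-i\psi(x')^{3/2}(\partial_1 f)(\psi(x')x_1,x')$ coincide; that is, $T_1 Uf=U H_1 f$ on $C_0^\infty(\R^d)$ --- equivalently, fibrewise, $u_{x'}^{-1}(-i\tfrac{d}{dx})u_{x'}=-i\psi(x')\tfrac{d}{dx}$ on $H^1(\R)$. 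Moreover the same substitution that proved unitarity shows $\|\partial_{x_1}(Uf)\|_{L^2}\le\|\psi\|_{L^\infty}\|\partial_1 f\|_{L^2}<\infty$, so $Uf\in D(T_1)=\int_{\R^{d-1}}^\oplus H^1(\R)$ for every $f\in C_0^\infty(\R^d)$; this is the only place the hypothesis $\psi\in L^\infty$ is used (we do not assume $\psi$ bounded below).

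Finally I would upgrade the identity on $C_0^\infty(\R^d)$ to an equality of self-adjoint operators. The operator $V:=U^{-1}T_1 U$ is self-adjoint, and by the previous step $C_0^\infty(\R^d)\subset D(V)$ with $Vf=H_1 f$ there, so $V\supseteq H_1|_{C_0^\infty(\R^d)}$. Now $-i\psi\,\partial_{x_1}$ is $-i\,u\cdot\nabla$ for the vector field $u=\psi\,e_1$, which is divergence-free ($\psi$ being independent of $x_1$) and has the globally defined flow $X(s,(x_1,x'))=(x_1+s\psi(x'),x')$, so by Proposition~\ref{thm:transport-self-adjoint} the operator $H_1|_{C_0^\infty(\R^d)}$ is essentially self-adjoint; a self-adjoint extension of an essentially self-adjoint operator equals its closure, whence $V=H_1$ and therefore $U H_1 U^{-1}=T_1$. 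Alternatively one can skip this bookkeeping altogether: since conjugation of a decomposable operator by a decomposable unitary acts fibre by fibre, $U^{-1}T_1 U=\int_{\R^{d-1}}^\oplus\bigl(-i\psi(x')\tfrac{d}{dx}\bigr)dx'=H_1$, with domains matching automatically. The computation itself is essentially trivial; the one genuine subtlety --- and what I expect to be the main obstacle --- is that $U$ need not map $C_0^\infty(\R^d)$ into itself when $\psi$ decays at infinity, so the formal intertwining must be promoted to an operator identity either via the essential-self-adjointness argument above or via the direct-integral formalism of Section~\ref{sec:dir-int}.
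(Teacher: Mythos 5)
Your proof is correct and takes essentially the same approach as the paper: the paper's entire argument is to define the very same unitary $\Ucal f(x_1,x')=\sqrt{\psi(x')}\,f(\psi(x')x_1,x')$ and assert that $\Ucal^{-1}T_1\Ucal=H_1$ is easy to verify. Your extra care with domains --- the chain-rule identity on $C_0^\infty(\R^d)$ upgraded to the self-adjoint operators via essential self-adjointness or the fibrewise direct-integral argument --- simply fills in the verification the paper leaves implicit.
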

\begin{proof}
Consider the unitary mapping $\Ucal:L^2(\R^d)\to L^2(\R^d)$ given by
	$$
	\Ucal f(x_1,x')
	:=
	\sqrt{\psi(x')}f(\psi(x')x_1,x'),\quad(x_1,x')\in\R\times\R^{d-1}.
	$$
It is easy to verify that
	$$
	\Ucal^{-1}T_1\Ucal
	=
	H_1.
	$$
\end{proof}

It follows from Proposition \ref{prop:dilation} that the spectral families of $T_1$ and $H_1$ are unitarily equivalent by the same unitary transformation $\Ucal$, that in turn depends on $\psi$. However, in order to attain bounds for the \textit{density of states}, we need to impose further hypotheses on $\psi$:

\begin{definition}[Regular shear flow]\label{def:reg-flow}
We say that the operator $H_1$ is a \emph{regular shear flow} whenever $\psi$ satisfies the following assumptions:
	\begin{enumerate}
	\item[\textbf{A1}]
	$\psi$ is positive, bounded uniformly away from $0$: $\psi(x\rq{})>\ell>0$ for all $x\rq{}\in\R^{d-1}$,
	\item[\textbf{A2}]
	$\psi$ is globally Lipschitz with constant $L$:
		$$
		|\psi(x\rq{})-\psi(y\rq{})|\leq L|x\rq{}-y\rq{}|
		$$
	for all $x\rq{},y\rq{}\in\R^{d-1}$.
	\end{enumerate}
\end{definition}

As before, we shall obtain information on the spectral measure of $H_1$ by first considering a multiplication operator that is unitarily equivalent to it. The partial Fourier transform $\Fcal_1$ defined in \eqref{eq:fourier} defines a unitary transformation $\Fcal_1H_1\Fcal_1^{-1}=m_1^\psi$ where $m_1^\psi$ is the multiplication operator
	$$
	m_1^\psi(\xi_1,x_2,\dots,x_d)=\xi_1\psi(x_2,\dots,x_d)
	$$
acting in $L^2_{\xi_1,x_2,\dots,x_d}(\R^d)$. Denoting the spectral family of $H_1$ by $\{E(\l)\}_{\lambda\in\R}$ we obtain the following bilinear form:
	\be\label{eq:layer-meas}
	\left(E(\l)f,g\right)_{L^2(\R^d)}
	=
	\int_{\psi\xi_1\leq\lambda}\Fcal_1[f](\xi_1,x_2,\dots,x_d)\overline{\Fcal_1[g](\xi_1,x_2,\dots,x_d)}\;d\xi_1dx_2\cdots dx_d.
	\en
Then, as before, on the absolutely continuous subspace (which we identify in Proposition \ref{prop:abs-cont}) we can write the expression for the density of states as
	\be\label{eq:layer-dens}
	\frac{d}{d\lambda}\Big|_{\l=\l_0}\left(E(\l)f,g\right)_{L^2(\R^d)}
	=
	\int_{\psi\xi_1=\lambda_0}\Fcal_1[f]\left(\xi_1,x_2,\dots,x_d\right)\overline{\Fcal_1[g]\left(\xi_1,x_2,\dots,x_d\right)}|\nabla(\psi\xi_1)|^{-1}d S_{\l_0},
	\en
which is a surface integral over the $(d-1)$-dimensional surface
	$$
	\Gamma_{\l_0}:=\left\{\left(\frac{\lambda_0}{\psi(x_2,\dots,x_d)},x_2,\dots,x_d\right)\right\}_{(x_2,\dots,x_d)\in\R^{d-1}}
	$$
with $dS_{\l_0}$ being the Lebesgue surface measure. The appearance of the gradient
	$$
	\nabla(\psi\xi_1)
	=
	\left(\frac{\p(\psi\xi_1)}{\p{\xi_1}},\frac{\p(\psi\xi_1)}{\p{x_2}},\cdots,\frac{\p(\psi\xi_1)}{\p x_d}\right)
	=
	\left(\psi,\xi_1\psi_{x_2},\cdots,\xi_1\psi_{x_d}\right)
	$$
in \eqref{eq:layer-dens} is due to the coarea formula (see \cite[Appendix C.3]{Evans2010}). Since $\psi$ is assumed to be uniformly bounded away from $0$, the term $|\nabla(\psi\xi_1)|^{-1}$ is uniformly bounded. Moreover, $\Gamma_{\l_0}$ is globally Lipschitz continuous since it is the graph of $x\rq{}\mapsto\lambda_0/\psi(x\rq{})$. Indeed, we have that
	$$
	\left|\frac{1}{\psi(x\rq{})}-\frac{1}{\psi(y\rq{})}\right|
	=
	\left|\frac{\psi(y\rq{})-\psi(x\rq{})}{\psi(x\rq{})\psi(y\rq{})}\right|
	\leq
	\frac{L|y\rq{}-x\rq{}|}{\ell^2}.
	$$
We denote by
	$$
	L_{\Gamma_{\l_0}}:=\frac{|\l_0|L}{\ell^2}
	$$
the Lipschitz constant of the surface $\Gamma_{\l_0}$. As before, to make sense of the right hand side of \eqref{eq:layer-dens} we need a theorem that allows us to evaluate the traces of $\Fcal_1[f]$ and $\Fcal_1[g]$ on the hypersurface $\Gamma_{\l_0}\subseteq\R^d$. Since $\Gamma_{\l_0}$ is the graph of a Lipschitz function of the variable $x'=(x_2,\dots,x_d)$, we can derive an estimate by a straightforward computation:

\begin{theorem}\label{thm:shear-flow-density}
Let $H_1$ be a regular shear flow and let $\sigma>\frac12$. The density of states of $H_1$ is estimated by
	$$
	\left|\frac{d}{d\lambda}\Big|_{\l=\l_0}\left(E(\l)f,g\right)_{L^2(\R^d)}\right|
	\leq
	C(\sigma,\psi,\lambda_0)\|f\|_{\mathcal X^\sigma}\|g\|_{\mathcal X^\sigma}
	$$
where the subspace $\mathcal X^\sigma=L^{2,\sigma}(\R;L^2(\R^{d-1}))$ is defined in \eqref{eq:l2sigma} and where $C(\sigma,\psi,\lambda_0)=C(\sigma,\ell)(1+L_{\Gamma_{\l_0}})>0$ is a constant depending on $\sigma, \ell, L$ and $\l_0$, but not on $f$ or $g$.
\end{theorem}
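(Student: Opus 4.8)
The plan is to estimate the surface integral \eqref{eq:layer-dens} directly, reducing it to a one-dimensional trace estimate fiber-by-fiber in the $x'$ variable. First I would parametrize the surface $\Gamma_{\l_0}$ by its graph representation over $\R^{d-1}$: writing points on $\Gamma_{\l_0}$ as $(\l_0/\psi(x'),x')$, the surface measure becomes $dS_{\l_0}=\sqrt{1+|\nabla(\l_0/\psi)(x')|^2}\,dx'$, and by the Lipschitz bound established just before the statement, $|\nabla(\l_0/\psi)|\leq L_{\Gamma_{\l_0}}=|\l_0|L/\ell^2$ a.e., so this Jacobian factor is bounded by $\sqrt{1+L_{\Gamma_{\l_0}}^2}\leq 1+L_{\Gamma_{\l_0}}$. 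Combined with the uniform bound $|\nabla(\psi\xi_1)|^{-1}\leq 1/\ell$ coming from assumption \textbf{A1}, the integral in \eqref{eq:layer-dens} is controlled by
	$$
	\frac{1+L_{\Gamma_{\l_0}}}{\ell}\int_{\R^{d-1}}\left|\Fcal_1[f]\left(\tfrac{\l_0}{\psi(x')},x'\right)\right|\left|\Fcal_1[g]\left(\tfrac{\l_0}{\psi(x')},x'\right)\right|dx'.
	$$

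Next I would apply the Cauchy–Schwarz inequality in $x'$ to split this into the product of $\bigl(\int_{\R^{d-1}}|\Fcal_1[f](\l_0/\psi(x'),x')|^2\,dx'\bigr)^{1/2}$ and the analogous factor for $g$. The key point is then to bound, for each fixed $x'$, the pointwise value $|\Fcal_1[f](\l_0/\psi(x'),x')|^2$ of the partial Fourier transform by a one-dimensional Sobolev trace inequality: since $\xi_1\mapsto\Fcal_1[f](\xi_1,x')$ is (for a.e.\ $x'$) the Fourier transform of $x_1\mapsto f(x_1,x')$, the condition $f\in\Xcal^\sigma=L^{2,\sigma}(\R;L^2(\R^{d-1}))$ means precisely that $\Fcal_1[f](\cdot,x')\in H^\sigma(\R)$ for a.e.\ $x'$ with $\int_{\R^{d-1}}\|\Fcal_1[f](\cdot,x')\|_{H^\sigma(\R)}^2\,dx'=\|f\|_{\Xcal^\sigma}^2$. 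The one-dimensional trace theorem for $\sigma>\frac12$ gives $\sup_{\xi_1\in\R}|\Fcal_1[f](\xi_1,x')|^2\leq C(\sigma)\|\Fcal_1[f](\cdot,x')\|_{H^\sigma(\R)}^2$, in particular at the point $\xi_1=\l_0/\psi(x')$. Integrating this over $x'\in\R^{d-1}$ yields $\int_{\R^{d-1}}|\Fcal_1[f](\l_0/\psi(x'),x')|^2\,dx'\leq C(\sigma)\|f\|_{\Xcal^\sigma}^2$, and likewise for $g$.

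Assembling these estimates gives
	$$
	\left|\frac{d}{d\lambda}\Big|_{\l=\l_0}\left(E(\l)f,g\right)_{L^2(\R^d)}\right|
	\leq
	\frac{C(\sigma)}{\ell}(1+L_{\Gamma_{\l_0}})\|f\|_{\Xcal^\sigma}\|g\|_{\Xcal^\sigma},
	$$
which is the asserted bound with $C(\sigma,\ell)=C(\sigma)/\ell$. Two points deserve care and constitute the main obstacle. First, one must justify rigorously that the formula \eqref{eq:layer-dens}, obtained formally via the coarea formula from the multiplication-operator representation \eqref{eq:layer-meas}, genuinely represents $\frac{d}{d\lambda}(E(\l)f,g)$ at $\l_0$ for $f,g\in\Xcal^\sigma$ — i.e.\ that differentiability holds and the surface integral is the derivative; this is where the trace theorem is really needed, to make the right-hand side well-defined and to show the difference quotients converge, and it parallels the treatment of the flat case around \eqref{eq:flat-dens}. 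Second, one must handle the measurability and a.e.-defined nature of the fiberwise restrictions $\Fcal_1[f](\cdot,x')$, ensuring Fubini applies; this is routine given $f\in L^{2,\sigma}(\R;L^2(\R^{d-1}))$ but should be acknowledged. The dependence of the constant is exactly as claimed: $L_{\Gamma_{\l_0}}$ absorbs all the $\l_0$- and $L$-dependence, while $\ell$ enters through the lower bound on $\psi$.
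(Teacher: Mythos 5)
Your proposal is correct and follows essentially the same route as the paper's proof: bound the Jacobian of the graph parametrization of $\Gamma_{\l_0}$ by $1+L_{\Gamma_{\l_0}}$, bound $|\nabla(\psi\xi_1)|^{-1}$ via \textbf{A1}, apply Cauchy--Schwarz over the surface, and control the restricted values of $\Fcal_1[f]$ by the weighted one-dimensional estimate (your $H^\sigma(\R)\hookrightarrow L^\infty(\R)$ trace bound is exactly the paper's pointwise estimate \eqref{eq:four-estimate}, valid for $\sigma>\frac12$). The only difference is cosmetic ordering of the Cauchy--Schwarz and Jacobian steps, and your closing remarks on justifying \eqref{eq:layer-dens} and Fubini flag points the paper itself treats only implicitly.
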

\begin{proof}
First we estimate the expression \eqref{eq:fourier} for the partial Fourier transform with respect to the first variable as (here $\xi_1\in\R$)
	\be\label{eq:four-estimate}
	\left|\Fcal_1[r](\xi_1,x')\right|^2
	\leq
	C(\sigma)\int_{-\infty}^\infty (1+x_1^2)^\sigma|r(x_1,x')|^2dx_1.
	\en
We write the surface measure on $\Gamma_{\l_0}$ as
	$$
	dS_{\l_0}=a_{\l_0}(x')dx'
	$$
with
	$$
	|a_{\l_0}(x')|
	\leq
	C(1+L_{\Gamma_{\l_0}}).
	$$
Hence, integrating \eqref{eq:four-estimate} on $\Gamma_{\l_0}$ we estimate \eqref{eq:layer-dens} as follows:
	\be\label{eq:dens-states-shear}
	\begin{split}
	\left|\frac{d}{d\lambda}\Big|_{\l=\l_0}\left(E(\l)f,g\right)_{L^2(\R^d)}\right|
	&=
	\left|\int_{\Gamma_{\lambda_0}}\Fcal_1[f]\left(\xi_1,x_2,\dots,x_d\right)\overline{\Fcal_1[g]\left(\xi_1,x_2,\dots,x_d\right)}|\nabla(\psi\xi_1)|^{-1}d S_{\l_0}\right|\\
	&\leq
	\left(\sup_{(\xi_1,x\rq{})\in\Gamma_{\l_0}}|\nabla(\psi(x\rq{})\xi_1)|^{-1}\right)\|\Fcal_1[f]\|_{L^2(\Gamma_{\l_0})}\|\Fcal_1[g]\|_{L^2(\Gamma_{\l_0})}\\
	&\leq
	C(\sigma,\ell)\left(\int_{\R^d} (1+x_1^2)^\sigma|f|^2a_{\l_0} dx\right)^{\frac12}\left(\int_{\R^d} (1+x_1^2)^\sigma|g|^2a_{\l_0} dx\right)^{\frac12}\\
	&\leq
	C(\sigma,\ell)(1+L_{\Gamma_{\l_0}})\|f\|_{\mathcal X^\sigma}\|g\|_{\mathcal X^\sigma}
	\end{split}
	\en
which proves the claim.
\end{proof}

We can now give a concrete example of the abstract formula \eqref{eq:A-lambda}:
\begin{corollary}\label{cor:a-lambda}
Let $\sigma>\frac12$ and denote by $(\mathcal X^\sigma)^*=L^{2,-\sigma}(\R;L^2(\R^{d-1}))=\mathcal X^{-\sigma}$ the dual space (with respect to the $L^2$ inner product) to $\mathcal X^{\sigma}$. There exists an operator $A(\l): \mathcal X^\sigma\to \mathcal X^{-\sigma}$ satisfying
	$$\left<A(\l)f,g\right>=\frac{d}{d\mu}\Big|_{\mu=\l}(E(\mu)f,g)_{L^2(\R^d)},$$
where $\left<\cdot,\cdot\right>$ denotes the pairing of the dual spaces $(\mathcal X^{-\sigma},\mathcal X^{\sigma})$. Moreover its operator norm satisfies the bound
	\be\label{eq:Al-estimate}
	\|A(\l)\|_{\mathcal B(\mathcal X^{\sigma},\mathcal X^{-\sigma})}\leq C(\sigma,\ell)(1+L_{\Gamma_\l})
	\en
where $C(\sigma,\ell)$ is the same constant as in \eqref{eq:dens-states-shear}.
\end{corollary}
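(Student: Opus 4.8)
The plan is to read Corollary~\ref{cor:a-lambda} as the standard functional-analytic repackaging of Theorem~\ref{thm:shear-flow-density}: a bounded sesquilinear form on a Hilbert space is represented by a bounded operator into its dual, with the same norm. Fix $\l\in\R$ and set $B_\l(f,g):=\frac{d}{d\mu}\big|_{\mu=\l}(E(\mu)f,g)_{L^2(\R^d)}$, to be viewed on $\Xcal^\sigma\times\Xcal^\sigma$. The first point is to confirm that $B_\l$ is defined on \emph{all} of $\Xcal^\sigma\times\Xcal^\sigma$, i.e.\ that $\mu\mapsto(E(\mu)f,g)_{L^2(\R^d)}$ is differentiable at $\mu=\l$ for every such pair. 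I would obtain this from the representation \eqref{eq:layer-meas}: substituting $\eta=\psi(x')\xi_1$ in the $\xi_1$-variable fibrewise and using Fubini recasts it as
$$
(E(\mu)f,g)_{L^2(\R^d)}=\int_{-\infty}^{\mu}\varphi(\eta)\,d\eta,\qquad
\varphi(\eta)=\int_{\R^{d-1}}\Fcal_1[f]\!\left(\tfrac{\eta}{\psi(x')},x'\right)\overline{\Fcal_1[g]\!\left(\tfrac{\eta}{\psi(x')},x'\right)}\,\psi(x')^{-1}\,dx'.
$$
The trace estimate \eqref{eq:four-estimate}, assumption A1, and the fact (recorded before Theorem~\ref{thm:shear-flow-density}) that $f,g\in\Xcal^\sigma$ forces $\Fcal_1[f],\Fcal_1[g]\in H^\sigma(\R;L^2(\R^{d-1}))$ with $\sigma>\tfrac12$ show that $\varphi$ is continuous, so by the fundamental theorem of calculus $\mu\mapsto(E(\mu)f,g)_{L^2(\R^d)}$ is differentiable with $B_\l(f,g)=\varphi(\l)$, which is exactly the surface integral \eqref{eq:layer-dens}. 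Theorem~\ref{thm:shear-flow-density} then yields $|B_\l(f,g)|\le C(\sigma,\ell)(1+L_{\Gamma_\l})\|f\|_{\Xcal^\sigma}\|g\|_{\Xcal^\sigma}$, with $C(\sigma,\ell)$ the constant of \eqref{eq:dens-states-shear}.

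Next I would record the duality $(\Xcal^\sigma)^*=\Xcal^{-\sigma}=L^{2,-\sigma}(\R;L^2(\R^{d-1}))$ taken with respect to the $L^2(\R^d)$ pairing $\langle h,g\rangle=\int_{\R^d}h\overline{g}\,dx$: multiplication by $(1+x_1^2)^{\sigma/2}$ is an isometric isomorphism $\Xcal^\sigma\to L^2(\R^d)$ and likewise $L^2(\R^d)\to\Xcal^{-\sigma}$, so composing with the self-duality of $L^2(\R^d)$ gives the claim with matching norms (and $\langle h,g\rangle$ is finite since $(1+x_1^2)^{\sigma/2}g$ and $(1+x_1^2)^{-\sigma/2}h$ lie in $L^2(\R^d)$). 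Now, for fixed $f\in\Xcal^\sigma$, the map $g\mapsto B_\l(f,g)$ is a bounded conjugate-linear functional on $\Xcal^\sigma$ by the previous paragraph, hence under this duality it is represented by a unique element $A(\l)f\in\Xcal^{-\sigma}$ with $\langle A(\l)f,g\rangle=B_\l(f,g)$ for all $g\in\Xcal^\sigma$ and $\|A(\l)f\|_{\Xcal^{-\sigma}}=\sup_{\|g\|_{\Xcal^\sigma}\le1}|B_\l(f,g)|\le C(\sigma,\ell)(1+L_{\Gamma_\l})\|f\|_{\Xcal^\sigma}$. Linearity of $f\mapsto A(\l)f$ is immediate from linearity of $B_\l$ in its first slot, so $A(\l)\in\Bcal(\Xcal^\sigma,\Xcal^{-\sigma})$ with the bound \eqref{eq:Al-estimate}, and by construction the constant is literally the one in \eqref{eq:dens-states-shear}.

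I do not expect a genuine obstacle: the analytic substance is entirely in Theorem~\ref{thm:shear-flow-density}, and the rest is Riesz representation plus weighted-space bookkeeping. The one place calling for mild care is the continuity of $\varphi$ in the parameter $\eta$ — one must justify passing the limit $\eta'\to\eta$ inside the $x'$-integral, which follows by dominated convergence using \eqref{eq:four-estimate} and $\psi\ge\ell>0$ (assumption A1) to manufacture an $\eta$-independent majorant, together with continuity of $s\mapsto\Fcal_1[f](s,x')$ for a.e.\ $x'$, again a consequence of $\Fcal_1[f]\in H^\sigma(\R;L^2(\R^{d-1}))$ with $\sigma>\tfrac12$. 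Everything else is routine.
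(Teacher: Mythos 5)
Your proposal is correct and follows essentially the same route as the paper: the paper's proof also obtains $A(\l)$ from the duality of $\mathcal X^{\sigma}$ and $\mathcal X^{-\sigma}$ with respect to the $L^2(\R^d)$ pairing and deduces the bound \eqref{eq:Al-estimate} directly from the bilinear-form estimate of Theorem \ref{thm:shear-flow-density}. Your additional verification of the differentiability of $\mu\mapsto(E(\mu)f,g)_{L^2(\R^d)}$ and the explicit weight-multiplication isomorphisms are details the paper leaves as standard, not a different argument.
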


\begin{proof}
The existence of $A(\l)$ is standard, due to the fact that $\mathcal X^{-\sigma}$ is the dual space to $\mathcal X^{\sigma}$ with respect to the $L^2(\R^d)$ scalar product. The bound on its operator norm is due to the bound on the norm of the bilinear form $\frac{d}{d\mu}\big|_{\mu=\l}\left(E(\mu)\cdot,\cdot\right)_{L^2(\R^d)}$.
\end{proof}


\begin{proposition}\label{prop:abs-cont}
The spectral measure of the self-adjoint operator $H_1:D(H_1)\subset L^2(\R^d)\to L^2(\R^d)$ is absolutely continuous with respect to the Lebesgue measure.
\end{proposition}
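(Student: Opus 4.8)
\emph{Proof proposal.} The plan is to reduce the statement to the uniform flow $T_1$ and then read off absolute continuity directly from the Fourier-side formula \eqref{eq:flat-meas} for the spectral family.

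First I would invoke Proposition~\ref{prop:dilation}: the unitary $\Ucal$ satisfies $\Ucal^{-1}T_1\Ucal=H_1$, so the spectral families are related by $E_{H_1}(\lambda)=\Ucal^{-1}E_{T_1}(\lambda)\Ucal$, whence
\[
  (E_{H_1}(\lambda)f,g)_{L^2(\R^d)}=(E_{T_1}(\lambda)\,\Ucal f,\Ucal g)_{L^2(\R^d)}\qquad\text{for all }f,g\in L^2(\R^d).
\]
Absolute continuity (in $\lambda$) of the left-hand side is thus equivalent to absolute continuity of the spectral measure of $T_1$ evaluated at $\Ucal f,\Ucal g$, so it suffices to prove the claim for $T_1$.

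For $T_1$ I would set $f=g$ in \eqref{eq:flat-meas} — an identity that extends from $D(T_1)$ to all of $L^2(\R^d)$ since, on the Fourier side, $E(\lambda)$ is just multiplication by $\chi_{\{\xi_1\le\lambda\}}$ and all terms are $L^2$-continuous in $f,g$. This gives
\[
  \|E(\lambda)f\|_{L^2(\R^d)}^2=\int_{-\infty}^{\lambda}\rho_f(\xi_1)\,d\xi_1,\qquad \rho_f(\xi_1):=\int_{\R^{d-1}}|\Fcal_1[f](\xi_1,x')|^2\,dx'\ge 0.
\]
By Tonelli's theorem and the unitarity of $\Fcal_1$ on $L^2(\R^d)$, $\int_{\R}\rho_f(\xi_1)\,d\xi_1=\|\Fcal_1 f\|_{L^2(\R^d)}^2=\|f\|_{L^2(\R^d)}^2<\infty$, so $\rho_f\in L^1(\R)$. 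Hence $\lambda\mapsto\|E(\lambda)f\|^2$ is the indefinite integral of an $L^1$ function, therefore absolutely continuous; equivalently, the positive spectral measure $d\|E(\lambda)f\|^2$ is absolutely continuous with respect to Lebesgue measure, for every $f\in L^2(\R^d)$.

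Finally I would pass from these diagonal measures to the general bilinear spectral measure: by the polarization identity a complex linear combination of absolutely continuous measures is absolutely continuous, so $d(E(\lambda)f,g)$ is absolutely continuous for all $f,g$; equivalently, invoking Proposition~\ref{prop:abs-cont-subs} with $U=\R$, the absolutely continuous subspace is closed and contains every $f\in L^2(\R^d)$ by the previous step, hence equals $L^2(\R^d)$. I do not expect a genuine obstacle here; the point worth stressing is that, in contrast with the \emph{pointwise} density-of-states estimate of Theorem~\ref{thm:shear-flow-density}, mere absolute continuity needs neither a trace theorem nor the weighted spaces $\Xcal^\sigma$ — square-integrability of $f$ alone forces it, because on the Fourier side $T_1$ is multiplication by the coordinate $\xi_1$, whose level sets carry no Lebesgue mass.
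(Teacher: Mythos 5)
Your proposal is correct, but it proves the proposition by a genuinely different route than the paper. The paper's proof is a two-line density argument: it invokes the quantitative bound \eqref{eq:dens-states-shear} of Theorem \ref{thm:shear-flow-density} to conclude that the spectral measure is absolutely continuous for $f,g$ in the dense subspace $\Xcal^\sigma$, and then passes to all of $L^2(\R^d)$ using the closedness of the absolutely continuous subspace (Proposition \ref{prop:abs-cont-subs}, i.e.\ Kato). You instead bypass the density-of-states machinery entirely: you transfer the problem to the uniform flow $T_1$ via the unitary $\Ucal$ of Proposition \ref{prop:dilation}, observe that on the Fourier side $\lambda\mapsto\|E(\lambda)f\|^2$ is the indefinite integral of the $L^1$ function $\rho_f(\xi_1)=\int_{\R^{d-1}}|\Fcal_1[f](\xi_1,x')|^2\,dx'$ (Tonelli plus unitarity of $\Fcal_1$), and then polarize. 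What your route buys: it needs no trace theorem, no weighted space $\Xcal^\sigma$, and in fact none of the regular-shear-flow hypotheses A1--A2 --- positivity and boundedness of $\psi$ suffice --- and it avoids the mild circularity in the paper's ordering, where the pointwise density formula \eqref{eq:layer-dens} is asserted ``on the absolutely continuous subspace (which we identify in Proposition \ref{prop:abs-cont})'' and yet the bound derived from it is what the paper's proof of that proposition cites. What the paper's route buys: for regular shear flows it delivers the stronger, quantitative statement (a uniformly bounded density of states on $\Xcal^\sigma$, Theorem \ref{thm:shear-flow-density} and Corollary \ref{cor:a-lambda}), which is what the ergodic theorem later needs, with absolute continuity falling out as a corollary. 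Both arguments are sound; yours is the more elementary and more general proof of this particular statement.
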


\begin{proof}
This is a simple consequence of the bound \eqref{eq:dens-states-shear}: the signed measure $\left(E(\l)f,f\right)_{L^2(\R^d)}$ is absolutely continuous with respect to the Lebesgue measure on $\mathcal X^\sigma$ for any $\sigma>\frac12$. Since $\mathcal X^{\sigma}$ is dense in $L^2(\R^d)$ the assertion follows from the fact that the absolutely continuous space is closed \cite[Chapter 10, \S1.2]{Kato1995}.
\end{proof}


\subsection{The weighted-$L^2$ case}\label{sec:parallel-weighted}
In this section we extend our foregoing results to certain weighted cases. Theorem \ref{thm:self-adjoint-weighted} demonstrates how sensitive the spectrum of $H_w$ is to the choice of weight. While plenty can be said on this topic, we shall focus on $M$-confined weights (see Definition \ref{def:confined}).

It is natural to restrict our attention to functions that have the same limits at $\pm\infty$, in particular since this class includes the constant functions. Therefore, in view of the characterization of the domain of weighted transport operators in Theorem \ref{thm:self-adjoint-weighted}, we make the following assumption:

\begin{assumption}
We say that the domain $D_w$ (defined in \eqref{eq:weighted-domain}) is \emph{symmetric} and denote it by $D^{\text{symm}}_w$ if $\alpha(x')=1$ for every $x'\in\R^{d-1}$.
\end{assumption}

Due to the fibered structure of transport operators, the spectrum of the operator is the union of the contributions from the confinement region and its complement. Since the main reason behind the introduction of a weight is to include functions that do not decay, we introduce a new functional space, analogous to the space $\mathcal X^\sigma$ defined in \eqref{eq:l2sigma}:
	\be\label{eq:y-sigma}
	\mathcal Y^\sigma
	:=
	\left\{
	r:\R^d\to\C\ | \ 
	r(\cdot,x')\in L^{2,\sigma}(\R),\ x'\notin S,\quad
	r(\cdot,x')\in L^2_{w(\cdot,x')}(\R),\ x'\in S
	\right\}.
	\en

As we shall see, this space is the natural space to consider. The norm on this space is defined as
	$$
	\|r\|_{\mathcal Y^\sigma}^2
	=
	\int_S\|r(\cdot,x')\|_{L^2_{w(\cdot,x')}(\R)}^2\ dx'
	+
	\int_{\R^{d-1}\setminus S}\|r(\cdot,x')\|_{L^{2,\sigma}(\R)}^2\ dx'.
	$$
We have the following simple characterization of the spectrum.

\begin{theorem}\label{thm:dens-states-weighted-parallel}
Let $\psi:\R^{d-1}\to\R$ satisfy the assumptions of Definition \ref{def:reg-flow}, let $w$ be an $M$-confined weight function on $\R^{d}$ with confinement region $S\subset\R^{d-1}$ and let $\sigma>\frac12$. Assume that $\R^{d-1}\setminus S$ has positive measure. Then there exists $\delta>0$ such that the self-adjoint operator $H_w=-i\frac{\psi}{w}\frac{\p}{\p x_1}:D^{\text{symm}}_w\subset L^2_{w}(\R^d)\to L^2_{w}(\R^d)$ has an absolutely continuous spectrum in $(-\delta,\delta)\setminus\{0\}$ (with no embedded eigenvalues). Moreover, the density of states satisfies
	\be\label{eq:est-dens-states-confined}
	\left|\frac{d}{d\lambda}\Big|_{\l=\l_0}\left(E_w(\l)f,g\right)_{L^2_{w}(\R^d)}\right|
	\leq
	C(\sigma,\ell,L,\lambda_0)\|f\|_{\mathcal Y^\sigma}\|g\|_{\mathcal Y^\sigma},
	\quad 0<|\lambda_0|<\delta,
	\en
where $C(\sigma,\ell,L,\lambda_0)$ is a constant depending only on its arguments but not on $f$ or $g$ (here $\ell$ and $L$ are parameters related to $\psi$ as in Definition \ref{def:reg-flow}).
\end{theorem}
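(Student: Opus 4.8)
The plan is to exploit the fibered splitting $H_w=H_w^S\oplus H_w^{S^c}$ from \eqref{eq:decomp-s} and treat the two summands separately, since they have completely different character. Over the complement $S^c=\R^{d-1}\setminus S$ the weight is trivial ($w\equiv1$ and $L^2_{w(\cdot,x')}(\R)=L^2(\R)$), so $H_w^{S^c}$ is nothing but the restriction of the regular shear flow $H_1=-i\psi\,\partial_{x_1}$ to the reducing subspace $\int_{S^c}^\oplus L^2(\R)\,dx'$. Over the confinement region $S$, Theorem \ref{thm:self-adjoint-weighted} together with \eqref{eq:eig-fiber} tells us that each fiber $H_w^{x'}$ has pure point spectrum, and — since we work with the symmetric domain, $\alpha\equiv1$ — its nonzero eigenvalues are bounded away from $0$ uniformly in $x'\in S$: by \eqref{eq:eig-fiber}, together with $\psi>\ell$ (condition \textbf{A1} of Definition \ref{def:reg-flow}) and $\|w(\cdot,x')\|_{L^1(\R)}<M$, each has modulus at least some $\delta_0=\delta_0(\ell,M)>0$. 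I would fix $\delta:=\delta_0$. Then, by the direct-integral characterization \eqref{eq:direct-int-eigenvalues} applied to $H_w^S$, the only spectral point of $H_w^S$ in $(-\delta,\delta)$ is $0$; consequently $E_w^S(\lambda)$ is constant on each of $(-\delta,0)$ and $[0,\delta)$, so $\frac{d}{d\lambda}\bigl(E_w^S(\lambda)f^S,g^S\bigr)_{L^2_w}\equiv0$ on $(-\delta,\delta)\setminus\{0\}$.

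For the $S^c$ piece I would import the unweighted analysis of Section \ref{sec:parallel-unweighted} verbatim, with the base variable $x'$ restricted to $S^c$. By Proposition \ref{prop:dilation} the dilation unitary $\Ucal$, which acts fiberwise, conjugates $H_w^{S^c}$ to $T_1$ restricted to $\int_{S^c}^\oplus L^2(\R)\,dx'$, and the latter is, via $\Fcal_1$, unitarily equivalent to multiplication by $\xi_1$ on $L^2(\R\times S^c)$, hence purely absolutely continuous by Proposition \ref{cl:spec-meas} (equivalently by the argument of Proposition \ref{prop:abs-cont}). For the quantitative estimate, the point is that the proof of Theorem \ref{thm:shear-flow-density} localizes in $x'$: the Fourier estimate \eqref{eq:four-estimate} is pointwise in $x'$, and the surface integral \eqref{eq:layer-dens} over $\Gamma_{\lambda_0}$ is of the form $\int_{\R^{d-1}}(\cdots)\,a_{\lambda_0}(x')\,dx'$. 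Restricting the integration to $x'\in S^c$ and repeating the computation \eqref{eq:dens-states-shear} yields, for $0<|\lambda_0|<\delta$,
\[ \Bigl|\tfrac{d}{d\lambda}\Big|_{\lambda=\lambda_0}\bigl(E_w^{S^c}(\lambda)f^{S^c},g^{S^c}\bigr)_{L^2_w}\Bigr| \le C(\sigma,\ell)\bigl(1+L_{\Gamma_{\lambda_0}}\bigr)\Bigl(\int_{\R\times S^c}(1+x_1^2)^\sigma|f|^2\,dx\Bigr)^{1/2}\Bigl(\int_{\R\times S^c}(1+x_1^2)^\sigma|g|^2\,dx\Bigr)^{1/2}. \]
On $S^c$ the weighted and unweighted inner products coincide, so nothing is lost in passing to $(\cdot,\cdot)_{L^2_w}$; and by the very definition of the $\mathcal Y^\sigma$-norm in \eqref{eq:y-sigma}, $\int_{\R\times S^c}(1+x_1^2)^\sigma|f|^2\,dx\le\|f\|_{\mathcal Y^\sigma}^2$. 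The same localized trace argument also shows the derivative in question genuinely exists for $f,g\in\mathcal Y^\sigma$.

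Finally I would recombine: since $\bigl(E_w(\lambda)f,g\bigr)_{L^2_w}=\bigl(E_w^S(\lambda)f^S,g^S\bigr)_{L^2_w}+\bigl(E_w^{S^c}(\lambda)f^{S^c},g^{S^c}\bigr)_{L^2_w}$, differentiating at $\lambda_0\in(-\delta,\delta)\setminus\{0\}$ kills the $S$-term by the first paragraph and leaves exactly the bound above, which is \eqref{eq:est-dens-states-confined} with $C(\sigma,\ell,L,\lambda_0)=C(\sigma,\ell)(1+|\lambda_0|L/\ell^2)$. For the spectral-type assertion, \eqref{eq:decomp-of-spectrum} and the first paragraph give $\Sigma(H_w)\cap\bigl((-\delta,\delta)\setminus\{0\}\bigr)=\Sigma(H_w^{S^c})\cap\bigl((-\delta,\delta)\setminus\{0\}\bigr)$, and $H_w^{S^c}$ is purely absolutely continuous; hence the spectrum there is absolutely continuous with no embedded eigenvalues. (Alternatively, the density-of-states bound makes $\lambda\mapsto(E_w(\lambda)f,f)_{L^2_w}$ locally Lipschitz, hence absolutely continuous, on the open set $(-\delta,0)\cup(0,\delta)$ for every $f$ in the dense subspace $\mathcal Y^\sigma\subset L^2_w(\R^d)$, and Proposition \ref{prop:abs-cont-subs} then upgrades this to the whole space.) The only step that genuinely needs care is verifying that the trace/coarea estimate underlying Theorem \ref{thm:shear-flow-density} localizes over the base variable $x'$ — which it does, precisely because $\Gamma_{\lambda_0}$ is a Lipschitz graph over $x'$ — and, relatedly, that $\mathcal Y^\sigma$ is designed so that its norm controls the $S^c$-contribution to the spectral measure while imposing no decay on the confined fibers, which is legitimate since those fibers carry no spectrum in the window $(-\delta,\delta)\setminus\{0\}$.
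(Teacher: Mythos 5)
Your proposal is correct and follows essentially the same route as the paper: decompose along the confinement region via the fiber/direct-integral structure, use the eigenvalue formula \eqref{eq:eig-fiber} (the paper quotes Corollary \ref{cor:spectrum-confined}) to get a spectral gap $\delta$ so the $S$-fibers contribute nothing in $(-\delta,\delta)\setminus\{0\}$, and then run the trace/coarea estimate of Theorem \ref{thm:shear-flow-density} restricted to $x'\in S^c$, where $w\equiv1$, to land on the $\mathcal Y^\sigma$ bound. The only cosmetic differences are that you phrase the gap through the constancy of $E_w^S(\lambda)$ and invoke the dilation unitary of Proposition \ref{prop:dilation}, whereas the paper works directly with $\Fcal_1$ on the $S^c$-fibers; the substance is identical.
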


\begin{remark}\label{rek:constant}
The constant $C(\sigma,\ell,L,\lambda_0)$ may be expressed more explicitly as
	$$
	C(\sigma,\ell,L,\lambda_0)=C(\sigma,\ell)(1+L_{\Gamma_{\lambda_0}})
	$$
where $L_{\Gamma_{\lambda_0}}=\frac{|\lambda_0|L}{\ell^2}$ is the Lipschitz constant of the surface $\Gamma_{\lambda_0}$. We refer to Theorem \ref{thm:shear-flow-density} and the discussion preceding it for further detail.
\end{remark}

\begin{proof}
The results of subsection \ref{sec:dir-int} imply that the spectrum of $H_w$ decomposes into contributions from $S$ and from its complement $S^c$. The spectrum of $H_w$ restricted to $S^c$ is simply $\R$ and due to Corollary \ref{cor:spectrum-confined} the spectrum of the restriction to $S$ has a spectral gap depending on $M$, except for a possible eigenvalue (which may be of infinite multiplicity) at zero (for this only the bound $M$ is important, rather than the assumptions on $S$ or the continuity of the $L^1$ norm of the fibers of the weight functions). Therefore there indeed exists such a $\delta=\delta(M)>0$.

We now recall the fiber decomposition \eqref{eq:direct-int-spec-family} of the spectral family $E_w(\lambda)$ of $H_w$
	$$
	E_w(\lambda)
	=
	\int_{\R^{d-1}}^\oplus E_w^{x'}(\lambda)\ dx'
	$$
where $\{E_w^{x'}(\lambda)\}_{\lambda\in\R}$ is the spectral family of the operator $H_w^{x'}$. Hence the expression for the density of states becomes
	$$
	\frac{d}{d\lambda}\Big|_{\l=\l_0}\left(E_w(\l)f,g\right)_{L^2_w(\R^d)}
	=
	\frac{d}{d\lambda}\Big|_{\l=\l_0}\int_{\R^{d-1}}\left(E_w^{x'}(\l)f(\cdot,x'),g(\cdot,x')\right)_{L^2_{w(\cdot,x')}(\R^d)}\ dx'.
	$$
However for $0<|\lambda_0|<\delta$ the fibers in $S$ do not contribute to the density of states, and, on the other hand, the weight in $\R^{d-1}\setminus S$ is identically $1$ so that the space $ L^2_{w(\cdot,x')}(\R)$ is simply $L^2(\R)$. Hence, if $S\neq\R^{d-1}$, we have
	\begin{align*}
	\frac{d}{d\lambda}\Big|_{\l=\l_0}\left(E_w(\l)f,g\right)_{ L^2_{w}(\R^d)}
	&=
	\frac{d}{d\lambda}\Big|_{\l=\l_0}\int_{\R^{d-1}\setminus S}\left(E_w^{x'}(\l)f(\cdot,x'),g(\cdot,x')\right)_{ L^2_{w(\cdot,x')}(\R)}\ dx'\\
	&=
	\frac{d}{d\lambda}\Big|_{\l=\l_0}\int_{\R^{d-1}\setminus S}\left(E_w^{x'}(\l)f(\cdot,x'),g(\cdot,x')\right)_{L^2(\R)}\ dx'\\
	&=
	\frac{d}{d\lambda}\Big|_{\l=\l_0}\int_{\R^{d-1}\setminus S}\int_{\psi(x')\xi_1\leq\lambda}\mathcal F_1[f](\xi_1,x')\overline{\mathcal F[g](\xi_1,x')}\ d\xi_1\ dx'\\
	&=
	\int_{x'\notin S,\ \psi\xi_1=\lambda_0}\mathcal F_1[f](\xi_1,x')\overline{\mathcal F[g](\xi_1,x')}|\nabla(\psi\xi_1)|^{-1}\ dS_{\lambda_0}.
	\end{align*}
Recalling estimate \eqref{eq:dens-states-shear} we may estimate
	\be
	\left|\frac{d}{d\lambda}\Big|_{\l=\l_0}\left(E_w(\l)f,g\right)_{ L^2_{w}(\R^d)}\right|
	\leq
	C(\sigma,\ell,L,\lambda_0)\|f\|_{\mathcal Y^\sigma}\|g\|_{\mathcal Y^\sigma}.
	\en
If $S=\R^{d-1}$ the density of states is simply $0$.
\end{proof}

\section{A uniform ergodic theorem}\label{sec:ergodic-parallel}
John von Neumann's classic ergodic theorem is:
\begin{theorem}[\cite{VonNeumann1932a}]
Let $G_t$ be a one-parameter group of measure-preserving transformations of a measure space $(\Omega,\mu)$. Let $P$ be the orthogonal projection onto $\{v\in L^2(\Omega,d\mu)\ |\ \forall t,\ v\circ G_t=v\}$. Then for any $f\in L^2(\Omega,d\mu)$
	$$
	\lim_{T\to\infty}\frac 1{2T}\int_{-T}^Tf\circ G_t\ dt=Pf.
	$$
\end{theorem}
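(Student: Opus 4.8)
The plan is to reproduce von Neumann's original argument, which rests entirely on the spectral theorem. First I would pass from the dynamical system to Hilbert space operators via Koopman's device: define $U_t\colon L^2(\Omega,d\mu)\to L^2(\Omega,d\mu)$ by $U_tf=f\circ G_t$. Because the $G_t$ are measure-preserving, each $U_t$ is an isometry, and the group law $G_{s+t}=G_s\circ G_t$ together with invertibility of $G_t$ makes $\{U_t\}_{t\in\R}$ a one-parameter group of unitary operators; under the standing measurability hypotheses on $(t,x)\mapsto G_t(x)$ this group is strongly continuous. Stone's theorem then furnishes a self-adjoint generator $H$ with $U_t=e^{itH}$, hence a spectral family $\{E(\lambda)\}_{\lambda\in\R}$ with $U_t=\int_\R e^{it\lambda}\,dE(\lambda)$.

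Next I would evaluate the Ces\`aro average through the functional calculus. For fixed $f\in L^2(\Omega,d\mu)$ the map $t\mapsto U_tf$ is norm-continuous with $\|U_tf\|=\|f\|$, so the Bochner integral $\frac1{2T}\int_{-T}^T U_tf\,dt$ is well defined; pairing against $g\in L^2(\Omega,d\mu)$ and applying Fubini's theorem to the (absolutely convergent) double integral gives
\[
\frac1{2T}\int_{-T}^T U_t\,dt=\int_\R \phi_T(\lambda)\,dE(\lambda),
\]
where
\[
\phi_T(\lambda):=\frac1{2T}\int_{-T}^T e^{it\lambda}\,dt=\frac{\sin(T\lambda)}{T\lambda},
\]
with the convention $\phi_T(0)=1$. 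I would then note that $|\phi_T(\lambda)|\le 1$ uniformly in $T$ and $\lambda$ (since $|\sin x|\le|x|$) and that $\phi_T(\lambda)\to\ONE_{\{0\}}(\lambda)$ pointwise as $T\to\infty$. The dominated convergence theorem, applied to the finite measures $\lambda\mapsto(E(\lambda)f,f)_{L^2(\Omega,d\mu)}$, yields
\[
\Bigl\|\tfrac1{2T}\int_{-T}^T U_tf\,dt-E(\{0\})f\Bigr\|_{L^2(\Omega,d\mu)}^2=\int_\R\bigl|\phi_T(\lambda)-\ONE_{\{0\}}(\lambda)\bigr|^2\,d(E(\lambda)f,f)_{L^2(\Omega,d\mu)}\xrightarrow[T\to\infty]{}0 .
\]

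Finally I would identify $E(\{0\})$ with $P$. By Stone's theorem $Hv=0$ if and only if $U_tv=v$ for all $t$, so $\ran E(\{0\})=\ker H=\{v\in L^2(\Omega,d\mu):\forall t,\ v\circ G_t=v\}$, which is exactly the closed subspace onto which $P$ projects; two orthogonal projections onto the same closed subspace coincide, hence $E(\{0\})=P$ and the limit above reads $\frac1{2T}\int_{-T}^T f\circ G_t\,dt\to Pf$. The step I expect to require the most care is justifying the applicability of Stone's theorem, i.e.\ deducing strong continuity of the Koopman group $\{U_t\}$ from the measure-theoretic hypotheses on the flow (one convenient route is to obtain weak measurability of $t\mapsto U_t$ from joint measurability of $(t,x)\mapsto G_t(x)$ and then invoke the fact that a weakly measurable one-parameter unitary group is automatically strongly continuous); the remaining analytic steps — Bochner integrability of $t\mapsto U_tf$, the interchange of ordinary and spectral integration, and the dominated convergence — are routine once the spectral representation is in hand.
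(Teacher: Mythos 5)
Your proposal is correct: it is the standard Koopman--Stone--spectral-theorem proof, writing the Ces\`aro average as $\int_\R \frac{\sin(T\lambda)}{T\lambda}\,dE(\lambda)$ and passing to the limit by dominated convergence, then identifying $E(\{0\})$ with $P$. The paper states this classical result with a citation rather than proving it, but this is exactly the argument it attributes to von Neumann and the same $\frac{\sin(T\lambda)}{T\lambda}$ decomposition it reuses (with the density-of-states estimate replacing dominated convergence near $\lambda=0$) in its own uniform ergodic theorem, so your route matches the paper's approach.
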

This is a statement on \emph{strong} convergence. Another well known result is Birkhoff's ergodic theorem \cite{Birkhoff1931} which deals with \emph{pointwise} convergence. A good reference for both theorems is \cite{Reed1981}. Using Theroem \ref{thm:dens-states-weighted-parallel} we now show \emph{uniform} convergence (or convergence \emph{in operator norm}) on a certain subspace:

\begin{theorem}
Let $\psi:\R^{d-1}\to\R$ satisfy the assumptions of Definition \ref{def:reg-flow}, let $w$ be an $M$-confined weight function on $\R^{d}$ with confinement region $S\subset\R^{d-1}$ and let $\sigma>\frac12$. Assume that $\R^{d-1}\setminus S$ has positive measure. Consider the self-adjoint operator $H_w=-i\frac{\psi}{w}\frac{\p}{\p x_1}:D^{\text{symm}}_w\subset L^2_{w}(\R^d)\to L^2_{w}(\R^d)$. Then
	$$
	\lim_{T\to\infty}\frac 1{2T}\int_{-T}^T e^{itH_w}\ dt=P_w
	$$
in the uniform operator topology on $\mathcal B(\mathcal Y^\sigma,\mathcal Y^{-\sigma})$, where $P_w$ is the orthogonal projection onto the kernel of $H_w$. Here $\mathcal Y^\sigma$ is as defined in \eqref{eq:y-sigma}.
\end{theorem}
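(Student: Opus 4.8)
The plan is to run von Neumann's spectral-theoretic proof of the mean ergodic theorem and to upgrade its strong convergence to operator-norm convergence on $\mathcal B(\mathcal Y^\sigma,\mathcal Y^{-\sigma})$ using the density-of-states bound of Theorem~\ref{thm:dens-states-weighted-parallel}. By the spectral theorem and Fubini,
\[
\frac{1}{2T}\int_{-T}^T e^{itH_w}\,dt=\int_\R k_T(\lambda)\,dE_w(\lambda),\qquad k_T(\lambda):=\frac{1}{2T}\int_{-T}^T e^{it\lambda}\,dt=\frac{\sin(T\lambda)}{T\lambda},
\]
with the convention $k_T(0)=1$; thus $|k_T|\le1$ everywhere and $|k_T(\lambda)|\le(T|\lambda|)^{-1}$. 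Since $H_w$ is self-adjoint, $\ker H_w=E_w(\{0\})L^2_w(\R^d)$, i.e.\ $P_w=E_w(\{0\})$, so separating the atom at $0$ gives
\[
\frac{1}{2T}\int_{-T}^T e^{itH_w}\,dt-P_w=\int_{\R\setminus\{0\}}k_T(\lambda)\,dE_w(\lambda).
\]
Moreover $\mathcal Y^\sigma\hookrightarrow L^2_w(\R^d)\hookrightarrow\mathcal Y^{-\sigma}$ continuously (on $\R^{d-1}\setminus S$ one has $w\equiv1$ and $\|\cdot\|_{L^2(\R)}\le\|\cdot\|_{L^{2,\sigma}(\R)}$ since $\sigma\ge0$, while on $S$ the two fiber norms coincide), so the operator $B$ on the right-hand side is bounded $\mathcal Y^\sigma\to\mathcal Y^{-\sigma}$ and $\|B\|_{\mathcal B(\mathcal Y^\sigma,\mathcal Y^{-\sigma})}=\sup\{|(Bf,g)_{L^2_w}|:\|f\|_{\mathcal Y^\sigma},\|g\|_{\mathcal Y^\sigma}\le1\}$. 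It therefore suffices to estimate this bilinear form.

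Fix the $\delta>0$ supplied by Theorem~\ref{thm:dens-states-weighted-parallel} and, for a parameter $0<\eta<\delta$ to be sent to $0$ at the end, split $\int_{\R\setminus\{0\}}=\int_{0<|\lambda|\le\eta}+\int_{|\lambda|>\eta}$. On $\{0<|\lambda|\le\eta\}$ the spectral measure of $H_w$ is absolutely continuous with no embedded eigenvalues (Theorem~\ref{thm:dens-states-weighted-parallel}): here the spectral gap of $H_w^S$ around $0$ and the fiber decomposition \eqref{eq:direct-int-spec-family} are what matter, since only the fibers $x'\notin S$ (on which $w\equiv1$) contribute. Writing $d(E_w(\lambda)f,g)_{L^2_w}=h_{f,g}(\lambda)\,d\lambda$ there, estimate \eqref{eq:est-dens-states-confined} gives $|h_{f,g}(\lambda)|\le C(\sigma,\ell,L,\lambda)\|f\|_{\mathcal Y^\sigma}\|g\|_{\mathcal Y^\sigma}$, and since $\lambda\mapsto C(\sigma,\ell,L,\lambda)=C(\sigma,\ell)(1+|\lambda|L/\ell^2)$ is bounded on $[-\delta,\delta]$ by some $C_\delta$, using $|k_T|\le1$ yields
\[
\Bigl|\int_{0<|\lambda|\le\eta}k_T\,d(E_w f,g)_{L^2_w}\Bigr|\le 2\eta\,C_\delta\,\|f\|_{\mathcal Y^\sigma}\|g\|_{\mathcal Y^\sigma}.
\]
On $\{|\lambda|>\eta\}$ the density of states is not used at all: the total variation of $\lambda\mapsto(E_w(\lambda)f,g)_{L^2_w}$ is at most $\|f\|_{L^2_w}\|g\|_{L^2_w}\le\|f\|_{\mathcal Y^\sigma}\|g\|_{\mathcal Y^\sigma}$, while $|k_T(\lambda)|\le(T\eta)^{-1}$ there, so that integral is at most $(T\eta)^{-1}\|f\|_{\mathcal Y^\sigma}\|g\|_{\mathcal Y^\sigma}$.

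Adding the two bounds, taking the supremum over the unit balls of $\mathcal Y^\sigma$, and letting $T\to\infty$ gives
\[
\limsup_{T\to\infty}\Bigl\|\tfrac{1}{2T}\int_{-T}^T e^{itH_w}\,dt-P_w\Bigr\|_{\mathcal B(\mathcal Y^\sigma,\mathcal Y^{-\sigma})}\le 2\eta\,C_\delta
\]
for every $\eta\in(0,\delta)$; letting $\eta\to0$ concludes the proof. I do not anticipate a real obstacle: the only substantive input is Theorem~\ref{thm:dens-states-weighted-parallel}, which simultaneously isolates the atom at $0$ from the continuous spectrum (the $M$-confinement gap) and furnishes the uniform bound on the density of states that replaces von Neumann's pointwise dominated-convergence step; away from $0$ the argument is soft because the Dirichlet-type kernel $k_T$ decays like $1/T$ uniformly on $\{|\lambda|>\eta\}$. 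The only points needing a little care are the chain of embeddings $\mathcal Y^\sigma\hookrightarrow L^2_w\hookrightarrow\mathcal Y^{-\sigma}$ with the attendant identification of the $\mathcal B(\mathcal Y^\sigma,\mathcal Y^{-\sigma})$-norm with the supremum of the bilinear form, and the fact that $B$ indeed maps $\mathcal Y^\sigma$ into $\mathcal Y^{-\sigma}$ (immediate since $B$ is a contraction on $L^2_w$).
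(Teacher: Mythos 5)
Your proof is correct and follows essentially the same route as the paper: isolate the atom at $0$, split the remaining spectral integral at a small threshold, control the part near $0$ with the density-of-states bound of Theorem \ref{thm:dens-states-weighted-parallel}, and control the part away from $0$ with the $1/(T\eta)$ decay of $\sin(T\lambda)/(T\lambda)$. The only cosmetic differences are that you estimate the bilinear form directly and send $T\to\infty$ before $\eta\to0$, whereas the paper optimizes the threshold as a function of $T$ and thereby records a convergence rate of $T^{-2/3}$.
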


\begin{proof}
For brevity, define the operator
	$$
	P^T_w=\frac 1{2T}\int_{-T}^T e^{itH_w} dt\in\mathcal B( L^2_{w}(\R^d), L^2_{w}(\R^d)).
	$$
In terms of the spectral family $\{E_w(\lambda)\}_{\lambda\in\R}$ of $H_w$,
	\be
	P^T_wf=\frac{1}{2T}\int_{-T}^T\int_\R e^{it\l}dE_w(\l)f\;dt,
	\en
so that, assuming sufficient regularity on $f$,
	\be
	P^T_wf=\int_\R\frac{\sin{T\l}}{T\l}dE_w(\l)f.
	\en
On the other hand, the projection operator $P_w$ is expressed as
	\be
	P_wf=\int_\R\chi(\l)dE_w(\l)f
	\en 
where $\chi(0)=1$ and $\chi(\l)=0$ whenever $\l\neq0$. Therefore the difference of the two operators is
	\be
	\left(P^T_w-P_w\right)f
	=
	\int_{\R\setminus\{0\}}\frac{\sin{T\l}}{T\l}dE_w(\l)f.
	\en
Our strategy is to break up the domain of integration as
	$$
	\int_{\R\setminus\{0\}}=\int_{(-\infty,-\epsilon]}+\int_{(-\epsilon,0)}+\int_{(0,\epsilon)}+\int_{[\epsilon,\infty)}=I_1+I_2+I_3+I_4
	$$
(where $\epsilon>0$) and estimate each term separately. We shall focus on $I_3$ and $I_4$; the integrals $I_1$ and $I_2$ are treated in an identical fashion. Consider first the term $I_3$. Recall that in Theorem \ref{thm:dens-states-weighted-parallel} it was shown that there exists $\delta>0$ such that the spectral measure of $H_w$ is absolutely continuous in $(-\delta,\delta)\setminus\{0\}$. Therefore if $\epsilon<\delta$ the estimate \eqref{eq:est-dens-states-confined} holds. Hence we can replace $dE_w(\l)$ by $A_w(\l) d\l$ to get
	\begin{align*}
	\left\|I_3\right\|_{\mathcal Y^{-\sigma}}^2
	&=
	\left\|\int_{(0,\epsilon)}\frac{\sin{T\l}}{T\lambda}A_w(\l)f\ d\l\right\|_{\mathcal Y^{-\sigma}}^2\\
	&\leq
	C(\sigma,\ell,L,\epsilon)\|f\|_{\mathcal Y^\sigma}^2\int_{(0,\epsilon)}\left|\frac{\sin{T\l}}{T\lambda}\right|^2d\lambda.
	\end{align*}
Recalling Remark \ref{rek:constant}, the constant has the form $C(\sigma,\ell,L,\epsilon)=C(\sigma,\ell)(1+\epsilon L\ell^{-2})$, so that
	\be
	\left\|I_3\right\|_{\mathcal Y^{-\sigma}}^2
	<
	\epsilon (1+\epsilon L\ell^{-2})C(\sigma,\ell)\|f\|_{\mathcal Y^\sigma}^2.
	\en
Turning to $I_4$ we have:
	\be
	\begin{split}
	\left\|I_4\right\|_{ L^2_{w}(\R^d)}^2
	&=
	\int_{[\epsilon,\infty)}\left|\frac{\sin{T\l}}{T\lambda}\right|^2d\left\|E_w(\l)f\right\|_{ L^2_{w}(\R^d)}^2\\
	&\leq
	\frac{1}{T^2\epsilon^2}\int_{[\epsilon,\infty)}d\left\|E_w(\l)f\right\|_{ L^2_{w}(\R^d)}^2\\
	&\leq
	\frac{1}{T^2\epsilon^2}\int_{\R}d\left\|E_w(\l)f\right\|_{ L^2_{w}(\R^d)}^2\\
	&=
	\frac{1}{T^2\epsilon^2}\|f\|_{ L^2_{w}(\R^d)}^2.
	\end{split}
	\en
We therefore conclude that
	\be
	\begin{split}
	\left\|\left(P^T_w-P_w\right) f\right\|_{\mathcal Y^{-\sigma}}^2
	&<
	2\epsilon \left(1+\frac{\epsilon L}{\ell^{2}}\right)C(\sigma,\ell)\|f\|_{\mathcal Y^\sigma}^2
	+
	\frac{2}{T^2\epsilon^2}\|f\|_{ L^2_{w}(\R^d)}^2\\
	&\leq
	C'(\sigma,\ell)\left(\epsilon \left(1+\frac{\epsilon L}{\ell^{2}}\right)+\frac{1}{T^2\epsilon^2}\right)\|f\|_{\mathcal Y^\sigma}^2.
	\end{split}
	\en
Since this estimate holds for every $\epsilon\in(0,\delta)$, it follows that
	\be
	\lim\limits_{T\to\infty}\|P^T_w-P_w\|_{\mathcal B(\mathcal Y^\sigma,\mathcal Y^{-\sigma})}=0.
	\en
The rate of convergence is $T^{-2/3}$.
\end{proof}

\begin{remark}
The rate of convergence can possibly be further improved by closer inspection of the estimates in the proof.
\end{remark}

\bibliography{library}
\bibliographystyle{abbrv}
\end{document}